\documentclass[reqno]{amsart}
\usepackage{color,graphicx,enumerate,wrapfig,amssymb}
\usepackage{pxfonts}
\usepackage{subfigure}
\usepackage{eucal}

\newcommand{\R}{{\mathbb R}}
\newcommand{\Rn}{{\mathbb R}^n}

\newcommand{\N}{{\mathbb N}}

\newcommand{\re}{\mathbb{R}}

\newcommand\norm[1]{\Arrowvert {#1}\Arrowvert}

\newtheorem{theorem}{Theorem}[section]

\newtheorem{cor}[theorem]{Corollary}

\newtheorem{definition}{Definition}[section]

\newtheorem{lemma}[theorem]{Lemma}

\newtheorem{prop}[theorem]{Proposition}
\newtheorem{remark}[theorem]{Remark}

\title [Double Obstacle Problems]{The Regularity Theory for the Double Obstacle Problem for Fully Nonlinear Operator}

\author{Ki-ahm Lee}
\address{Department of Mathematical Sciences,
Seoul National University, Seoul 08826, Korea \& Korea Institute for Advanced Study, Seoul 02455, Korea}
\email{kiahm@snu.ac.kr}

\author{Jinwan Park}
\address{Research Institute of Mathematics, 
Seoul National University, Seoul 08826, Korea}
\email{jinwann@snu.ac.kr}

\subjclass[2010]{Primary 35R35, 35B65.}
\keywords{free boundary problem, obstacle problem, double obstacle problem, fully nonlinear operator.}

\begin{document}
\maketitle

\begin{abstract}
In this paper, we prove the existence and uniqueness of $W^{2,p}$ ($n<p<\infty$) solutions of a double obstacle problem with $C^{1,1}$ obstacle functions. Moreover, we show the optimal regularity of the solution and the local $C^1$ regularity of the free boundary. In the study of the regularity of the free boundary, we deal with a general problem, the no-sign reduced double obstacle problem with an upper obstacle $\psi$,
$F(D^2 u,x) =f\chi_{\Omega(u) \cap\{ u< \psi\} } + F(D^2\psi,x) \chi_{\Omega(u)\cap \{u=\psi\}}, u\le \psi  \text { in } B_1$,
where 
$\Omega(u)=B_1 \setminus \left( \{u=0\} \cap \{ \nabla u =0\}\right)$. %under thickness assumptions for $u$ and $\psi$. %The study on the regularity of the free boundary for the general problem is an extension of the research in \cite{LPS}.
\end{abstract}

\maketitle

\section{Introduction}

Obstacle problems with a single obstacle appear in various fields of study such as porous media, elasto-plasticity, optimal control, and financial mathematics, see \cite{Fri, Caf98}. The regularity of the solutions and the free boundaries of the problems have been actively studied by \cite{Caf77, CKS, Iva, Lee98, ALS}.

The double obstacle problem, which is the obstacle problem with two obstacles, originates in the study of optimal investment problems with transaction costs, the game of tug-of-war, and semiconductor devices, (see \cite{MR} and the references therein). Recently, global homogeneous solutions to the double obstacle problem, with homogeneous obstacles was considered by \cite{Ale} and the regularity of the free boundaries of the double obstacle problem for Laplacian was obtained by \cite{LPS}. 

In this paper, we discuss the regularity of the solution and the free boundary for the double obstacle problem of the fully nonlinear operator. Precisely, we prove the existence and uniqueness of $W^{2,p}$ ($n<p<\infty$) solutions of \emph{double obstacle problem} for the fully nonlinear operator in a domain $D\subset \re^n$,
\begin{equation}\label{eq-main-global}\tag{$FB$}
\left\{ \begin{array}{l l}
F(D^2u,x)\ge0, & \quad \text{ in } \{u>\phi_1\} \cap D,\\
F(D^2u,x)\le0, & \quad \text{ in } \{u<\phi_2\} \cap D,\\
\phi_1(x)\le u(x)\le \phi_2(x) & \quad \text{ in } D,\\
u(x)= g(x) & \quad \text{ on } \partial D, 
\end{array}\right.
\end{equation}
with $\phi_1, \phi_2\in C^{1,1}(\overline{D})$, $\partial D \in C^{2,\alpha}$, $g\in C^{2,\alpha}(\overline{D})$ and $\phi_1\le g \le \phi_2$ in $\partial D$. The optimal ($C^{1,1}$) regularity of the solution $u$ to \eqref{eq-main-global} is also obtained. Moreover, we have $C^{1}$ regularity of the free boundary $\partial \{ u=\psi_1\}$ of \eqref{eq-main-global}, by studying the regularity of the free boundary for a general problem \eqref{eq-main-local}, which contains a reduced version \eqref{eq-ori-local} of \eqref{eq-main-global}.

Specifically, by subtracting the lower obstacle $\phi_1$ from the solution $u$, the problem \eqref{eq-main-global} is reduced to the double obstacle problem with the zero lower obstacle:
\begin{align}\label{eq-ori-local}\tag{$FB_{local}$}
\begin{split}
F(D^2 u,x) =f\chi_{\{0< u< \psi\} } + F(D^2\psi,x) \chi_{\{0<u=\psi\}}, \quad 0\le u\le \psi & \quad \text { in } B_1,
\end{split}
\end{align}
with $\psi\in C^{1,1}(\overline{B_1}) \cap C^{2,1}(\overline{\{\psi>0\}})$, $f\in C^{0,1}(B_1)$, see Subsection \ref{rem-red} for more detail. Furthermore, we consider a general problem \eqref{eq-main-local} of \eqref{eq-ori-local}, which relaxes the sign condition of $u$ in \eqref{eq-ori-local} (i.e., the solution could be below the lower zero obstacle):
\begin{align}\label{eq-main-local}\tag{$FB_{\textit{nosign local}}$}
\begin{split}
F(D^2 u,x) =f\chi_{\Omega(u) \cap\{ u< \psi\} } + F(D^2\psi,x) \chi_{\Omega(u)\cap \{u=\psi\}},\quad u\le \psi \quad &\text { in } B_1,
\end{split}
\end{align}
where
\begin{align*}
\Omega(u):=B_1 \setminus (\{u= 0\} \cap \{\nabla u=0\}) \quad \text{and}\quad f\in C^{0,1}(B_1),
\end{align*}
with \emph{ the upper obstacle function} 
$$\psi \in C^{1,1}(B_1) \cap C^{2,1}(\overline{\Omega(\psi)}),\quad \Omega(\psi):=B_1 \setminus (\{\psi= 0\} \cap \{\nabla \psi=0\}).$$
By obtaining the regularity of the free boundary $\Gamma(u) := \partial \Omega(u) \cap B_1$ of \eqref{eq-main-local} Theorem \ref{thm-main-2}, we have the regularity of the free boundary $\partial \{ u =\psi_1\}$ for \eqref{eq-main-global} as a corollary, see Corollary \ref{cor-main}.

The result of the problem \eqref{eq-main-local} is a generalization of the theory for the no-sign single obstacle problem ($\psi = \infty$ in \eqref{eq-main-local}) studied in \cite{CKS, FS14}. Moreover, it is an extension for the result of the problem for Laplacian in \cite{LPS}.

\subsection{Methodology and contents}\label{method plan}

The main idea to have the regularity of the free boundary, $\Gamma(u)$ of \eqref{eq-main-local}, which corresponds to $\partial \{u=\psi_1\}$ in \eqref{eq-main-global}, is considering the upper obstacle $\psi$ as a solution of the single obstacle problem, \eqref{eq-ori-local} with $\psi=\infty$. Additionally, we apply the method of blowup to the upper obstacle $\psi$ with the thickness assumption of the zero set of $\psi$ at $0$, which means that the zero set near the free boundary point $0$ is sufficiently large in some sense, see Subsection \ref{subsec-def}. Then, the blowup $\psi_0$ of the upper obstacle $\psi$ is of the form $c (x_n^+)^2$, $c>0$ and it is crucially used to have the regularity of the free boundary.

The main difficulty to have the regularity of the free boundary is the lack of monotonicity formulas, used in the problem for Laplacian in \cite{LPS}. Precisely, in the paper, by using the formulas, we have the \emph{classification of global solutions}, a global solution of \eqref{eq-main-local} in whole domain $\re^n$ is of the form $c (x_n^+)^2$, $c>0$. However, it is not applicable for the fully nonlinear case due to the nonlinearity.

Hence, for the fully nonlinear operator, we focus on the fact that the global solution $u$ is zero in a half-space $\{x_n\le 0\}$. Then, the optimal ($C^{1,1}$) regularity for $u$ implies that $\partial_e u/x_n$ is finite in $\re^n$. Therefore, we prove that $\partial_e u/x_n$ is identically zero in $\re^n$ for any direction $e \in \mathbb{S}^{n-1}\cap e_n^{\perp}$, which implies that $u$ is a one-dimensional function and it is of the form $\frac{c}{2}(x_n^+)^2$, for a positive constant $c$. It is noticeable that similar arguments for the second derivative have been introduced in \cite{LS01}, and the one for the first derivative as above has been considered in \cite{IM16b} in the study of the free boundary near the fixed boundary.

Now, we summarize the contents of this paper. In Subsection \ref{sec-exi}, we have the existence and uniqueness of the $W^{2,p}$ ($n<p< \infty$) solution of \eqref{eq-main-global} by using a penalization method, Proposition \ref{W2p}. Since the obstacles $\phi_1$ and $\phi_2$ have $C^{1,1}$ regularity, we consider the penalization method with bounded penalty term. In Subsection \ref{sec-opt}, we have the optimal regularity of the solution of \eqref{eq-main-global} Proposition \ref{prop-optimal-est} by using the quadratic growth of the solution of \eqref{eq-ori-local}, Proposition \ref{prop-quad-grow}. 

 In Subsection \ref{sec-cla}, we obtain the classification of global solutions Proposition \ref{cla}, the global solution $u$ to \eqref{eq-main-local} with the upper obstacle $\psi=c (x^+_n)^2$ is also of the form $c_1 (x_n^+)^2$, for some $c, c_1 >0$, by using the argument discussed in the previous paragraph. 
 
Therefore, in Subsection \ref{sec-dir}, we prove the directional monotonicity and the proof of the regularity for the free boundaries, Theorem \ref{thm-main-2}, using the methods considered in \cite{Lee98, PSU, IM16a, LPS} and references therein.

\begin{remark}
The reason to set the regularity of the obstacle functions $\phi_1$, $\phi_2$ in \eqref{eq-main-global}, and $\psi$ in \eqref{eq-ori-local}, \eqref{eq-main-local} to $C^{1,1}$ is closely related to the main idea introduced in the first paragraph of the previous subsection. Indeed, to apply the method of blowup to the upper obstacle $\psi$, the regularity of $\psi$ should be at least $C^{1,1}$. Furthermore, the thickness assumption of the zero set of $\psi$ means that the region where the equation $F(D^2\psi) =0$ satisfies ($\{ \psi=0\}$) is sufficiently large. Hence, $D^2 \psi$ should not be continuous, and therefore, the regularity of the upper obstacle $\psi$ should not be better than $C^{1,1}$.
\end{remark}

\subsection{Reduction of \eqref{eq-main-global}}\label{rem-red}
By subtracting the lower obstacle $\phi_1$ from the solution $u$, we reduce the problem \eqref{eq-main-global} to the double obstacle problem with zero lower obstacle. Specifically, we define $\tilde F(\mathcal M,x):= F(\mathcal M+D^2\phi_1,x)-F(D^2\phi_1,x)$ and $v:=u-\phi_1$, where $u$ is a $W^{2,p}$ ($n<p< \infty$) solution of \eqref{eq-main-global}. Then,
\begin{align*}
\tilde F(D^2v,x)&=F(D^2u,x)-F(D^2\phi_1,x)\\
&=-F(D^2 \phi_1,x) \chi_{\{\phi_1<u< \phi_2\}}+\left(F(D^2 \phi_2,x)-F(D^2 \phi_1,x)\right) \chi_{\{\phi_1<u=\phi_2\}}\\
&=-F(D^2 \phi_1,x) \chi_{\{0<v< \phi_2-\phi_1\}}+\tilde F(D^2 (\phi_2-\phi_1),x) \chi_{\{0<v=\phi_2-\phi_1\}}.
\end{align*}
By replacing $f=-F(D^2 \phi_1,x)$, $\psi=\phi_2-\phi_1$ and reusing $v=u-\phi_1$ by $u$, $\tilde F(\mathcal M,x)= F(\mathcal M+D^2\phi_1,x)-F(D^2\phi_1,x)$ by $F(\mathcal M,x)$, $u$ is a $W^{2,p}$ solution of 
\begin{equation}\label{rdop}
F(D^2 u,x) =f\chi_{\{0< u< \psi\} } + F(D^2\psi,x) \chi_{\{0<u=\psi\}}\quad \text {a.e. in } D,
\end{equation}
with $0\le u\le \psi \text { in } D$, $f\in L^{\infty}(D)$, and $\psi \in C^{1,1}(\overline{D}).$ Since we discuss the local regularity of the free boundaries, we consider a local form \eqref{eq-ori-local} of \eqref{rdop}.

\subsection{Notations}
We will use the following notations throughout the paper. 
$$\begin{array}{ll}
C, C_0, C_1 &\hbox{generic constants }\cr 
\chi_E &\hbox{the characteristic function of the set } E, (E \subset \Rn)\cr 
\overline E &\hbox{the closure of } E\cr 
\partial E &\hbox{the boundary of a set } E \cr
|E| &n\hbox{-dimensional Lebesgue measure of the set } E\\ 
B_r(x), B_r \qquad &\{y\in \Rn: |y - x|<r\}, \quad B_r(0) \\
\Omega(u), \Omega(\psi) & \hbox{see Equation } \eqref{eq-main-local}\\
\Lambda(u), \Lambda(\psi) & B_1\setminus \Omega(u), B_1\setminus \Omega(\psi) \\
\Gamma(u),\Gamma^\psi(u) & \partial \Lambda(u)\cap B_1, \partial \{u=\psi\} \cap B_1\\
\Gamma^d(u) & \Gamma(u)\cap \Gamma^\psi(u) \quad \text{(the intersection of free boundaries)}\\ 
\partial_{\mathbf{\nu}}, \partial_{\nu e}& \hbox{first and second directional derivatives }\\
P_r(M), P_\infty (M) & \hbox{see Definition } \ref{loc sol}, \ref{glo sol}\cr
\delta_r(u,x), \delta_r(u) &\hbox{see Definition } \ref{thi}\\
\mathcal P^{+}, \mathcal P^{-}& \text{Pucci operators}\\
\mathcal S, \overline {\mathcal S},\underline {\mathcal S},\mathcal S^* & \text{the viscosity solution spaces for the Pucci operators}
\end{array}
$$ 

We refer to the book of Caffarelli-Cabr\'{e} \cite{CC}, for the definitions of the viscosity solution, Pucci operators $\mathcal P^{\pm}$ and the spaces of viscosity solutions of the Pucci operators $\mathcal S(\lambda_0, \lambda_1,f)$, $\overline{\mathcal S}(\lambda_0, \lambda_1,f)$, $\underline{\mathcal S}(\lambda_0, \lambda_1,f)$, and $\mathcal S^*(\lambda_0, \lambda_1,f)$.

\subsection{Conditions on $F=F(\mathcal M,x)$} \label{def ful}
We assume that the fully nonlinear operator $F(\mathcal M,x)$ satisfies the following conditions:
\begin{enumerate}[(F1)]
\item $F(0,x)=0$ \quad for all $x\in \re^n$.
\item $F$ is uniformly elliptic with ellipticity constants $0<\lambda_0 \le \lambda_1< +\infty$, that is
$$\lambda_0 \norm{\mathcal N} \le F(\mathcal M+\mathcal N,x)-F(\mathcal M,x) \le \lambda_1 \norm{\mathcal N},$$
for any symmetric $n \times n$ matrix $\mathcal M$ and positive definite symmetric $n \times n$ matrix $\mathcal N$.
\item $F(\mathcal M,x)$ is convex in $\mathcal M$ for all $x\in \re^n$.
\item $$|F(\mathcal M,x)-F(\mathcal M,y)| \le C\norm{\mathcal M} |x-y|^\alpha,$$ 
for some $0<\alpha\le 1$.
\end{enumerate}
\begin{enumerate}[(F4)']
\item $$|F(\mathcal M,x)-F(\mathcal M,y)| \le C(\norm{\mathcal M}+C_1) |x-y|^\alpha, \quad \text{ for some } 0<\alpha\le 1.$$ 
\end{enumerate}
%Furthermore, we introduce the \emph{Pucci operators} $\cP^{\pm}$, with $0< \lambda_0\le \lambda_1< +\infty$ as
%$$\cP^-(\mathcal M,\lambda_0,\lambda_1):=\inf_{\lambda_0 Id \le\mathcal N \le \lambda_1 Id} Tr \mathcal N \mathcal M, \quad \cP^+(\mathcal M,\lambda_0,\lambda_1):=\sup_{\lambda_0 Id \le \mathcal N \le \lambda_1 Id} Tr\mathcal N \mathcal M,$$
%for any symmetric $n \times n$ matrix $\mathcal M$.
\begin{remark}
We define oscillations of the fully nonlinear operator $F$ in the variable $x$ by 
$$\beta_F(x,x_0):=\sup_{\mathcal M\in S\setminus \{0\}} \frac{|F(\mathcal M,x)-F(\mathcal M,x_0)|}{\norm{\mathcal M}}$$
and
$$\tilde \beta_F(x,x_0) :=\sup_{\mathcal M\in S} \frac{|F(\mathcal M,x)-F(\mathcal M,x_0)|}{\norm{\mathcal M}+1}.$$
For any fixed $x_0$, the condition (F4) implies that $\beta_F$ and $\tilde \beta_F$ are $C^\alpha$ at $x_0$. Then, $\beta_F$ and $\tilde \beta_F$ satisfy the conditions for the $W^{2,p}$ and $C^{2,\alpha}$ estimates of viscosity solutions $v$ to $F(D^2v, x) = f(x)$, respectively (see Chapter 7 and 8 in \cite{CC} and \cite{Win}).

Hence, in Section \ref{sec-thm1}, we assume that $F$ satisfies (F4) and the $W^{2,p}$ estimate is used in the proof of the existence and uniqueness of $W^{2,p}$ solution \eqref{eq-main-local}, Proposition \ref{W2p} and $C^{2,\alpha}$ estimate is used in the proof of optimal regularity of solution, Proposition \ref{prop-optimal-est}.

In Section \ref{sec RFB},  we study the regularity of the free boundary for the reduced forms, \eqref{eq-ori-local} and \eqref{eq-main-local}. If $F$ is the fully nonlinear operator of \eqref{eq-main-global}, then $\tilde F= F(\mathcal M+D^2\phi_1,x)-F(D^2\phi_1,x)$, introduced in Subsection \ref{rem-red}, is the fully nonlinear operator in \eqref{eq-ori-local} and \eqref{eq-main-local}. If $F(\mathcal M,x)$ satisfies (F4), then $\tilde F(\mathcal M,x)$ satisfies (F4)' and $\tilde \beta_{\tilde F}(x,x_0)$ is $C^\alpha$ for the variable $x$ at fixed $x_0 \in \re^n$. Hence, we have the $C^{2,\alpha}$ estimate of viscosity solutions $v$ to $\tilde F(D^2v, x) = f(x)$, and it is used in Lemma \ref{blo1}, to have that the blowup $u_0$ of $u$ of \eqref{eq-main-local} is a global solution.

We note that, in Section \ref{sec RFB}, when we study the regularity of the free boundary for the reduced forms, \eqref{eq-ori-local} and \eqref{eq-main-local}, we denote the fully nonlinear operator by $F$, instead of $\tilde F$. Hence, we assume (F4)' for a fully nonlinear operator $F$, in Section \ref{sec RFB}.
\end{remark}

%and in Theorem \ref{thm-main-2}, we assume (F4)' instead of (F4). 

% which are essential theories to study on obstacle problems.
%The theory for estimates should be discussed in advance, before we research on the double obstacle problem. 

\subsection{Definitions}\label{subsec-def} 

In this subsection, we define the rescaling, blowup, thickness of coincidence sets $\Lambda(u)$ and $\Lambda(u) \cap \Lambda(\psi)$, and solution spaces. These concepts are already discussed in the literature of the obstacle problem, e.g. \cite{Caf77, Caf98, Lee98, PSU, FS14, LPS}. We introduce the concepts for \eqref{eq-main-local}, for the reader's convenience.

In order to find the possible configuration of the solution near the free boundary, the following blowup concept has been used heavily at \cite{Caf77, Fri} and other references. For a $W^{2,n}$ solution, $u $, of \eqref{eq-main-local} in $B_r$, we define the \emph{rescaling function} of $u$ at $x_0\in \partial \Lambda(u) \cap B_r$ with $\rho>0$ as 
$$u_\rho(x)=u_{\rho,x_0}(x):=\dfrac{u(x_0+\rho x)-u(x_0)}{\rho^2}, \quad \text{ for } x\in (B_{r}-x_0)/ \rho.$$
By optimal ($C^{1,1}$) regularity of solution $u$ (Theorem \ref{thm-main-1}), for any sequence $\rho_i \to 0$, there exists a subsequence $\rho_{i_j}$ of $\rho_i$ and $u_0 \in C^{1,1}_{loc}(\re^n)$ such that 
$$u_{\rho_{i_j}} \rightarrow u_0 \text{ uniformly in } C^{1,\alpha}_{loc}(\R^n) \quad \text{ for any } 0<\alpha<1.$$ The limit function $u_0$ is a \emph{blowup of $u$ at $x_0$}.

% Let $u$ be a solution of \eqref{eq-main-local} in $\re^n$. Then, for a sequence $\lambda_i \to \infty$, there exists a subsequence $\lambda_{i_j}$ of $\lambda_i$ and $u_0 \in C^{1,1}_{loc}(\re^n)$
%such that
%$$u_{\lambda_{i_j}} \rightarrow u_\infty \text{ in } C^{1,\alpha}_{loc}(\R^n) \quad \text{ for any } 0<\alpha<1.$$ Such $u_\infty$ is called a \emph{shrink-down of $u$ at $x_0$}.

\begin{definition}\label{thi}(Thickness of the coincident set $\Lambda(u)$ ) We denote by $\delta_r(u,x)$ the thickness of $\Lambda(u)$ in $B_r(x)$, i.e.,
$$\delta_r(u,x):=\dfrac{\text{MD}(\Lambda(u) \cap B_r(x))}{r},$$ 
where MD($A$), the \emph{minimal diameter} of subset $A$ of $\re^n$, is the least distance between two parallel hyperplanes containing $A$. We will use the abbreviated notation $\delta_r(u)$ for $\delta_r(u,0)$.
\end{definition}

To briefly explain the theory of the regularity of free boundary in Section \ref{sec RFB}, we define classes of local and global solutions of the problem.

\begin{definition} \label{loc sol}(Local solutions)
We say a $W^{2,n}$ function $u$ belongs to the class
$P_r(M)$ $(0<r<\infty)$, if 
$u$ satisfies
\begin{enumerate}[(i)]
\item $F(D^2 u,x) =f\chi_{\Omega(u) \cap\{u <\psi\} } + F(D^2\psi,x) \chi_{\Omega(u) \cap \{u=\psi\}}, \quad u\le \psi \quad \text { in } B_r,$ 
\item $\|D^2 u \|_{L^\infty ,B_r} \leq M$,
\item $0\in \Gamma^d(u),$
%\rred{\item [(4)] $\Lambda(\psi) \subset \Lambda(u)$}
\end{enumerate}
where $f\in C^{0,\alpha}(B_r)$ and $\psi \in C^{1,1}(B_r)\cap C^{2,\alpha}(\overline{\Omega(\psi)})$.
\end{definition}

\begin{definition} \label{glo sol}(Global solutions)
We say a $W^{2,n}$ function $u$ belongs to the class
$P_\infty(M)$, if 
$u$ satisfies
\begin{enumerate}[(i)]
\item $F(D^2 u) =\chi_{\Omega(u) \cap\{u <\psi\} } + F(D^2\psi) \chi_{\Omega(u) \cap \{u=\psi\}}, \quad u\le \psi \quad \text { in } \re^n,$
\item $F(D^2 \psi) =a \chi_{\Omega(\psi)}$ \text{ in } $\re^n$, for a constant $a>1$,
\item $\| D^2 u \|_{\infty,\re^n} \leq M$,
\item $0\in \Gamma(u).$
\end{enumerate}
\end{definition}

%\subsection{Main Results}

\subsection{Main Theorems}
The purposes of the paper are to obtain the existence, uniqueness, and optimal regularity of the solution for the double obstacle problem and the regularity of the free boundary. The main theorems are as follows: 

\begin{theorem}[Existence, uniqueness and optimal regularity]\label{thm-main-1}
Assume $F$ satisfies (F1)-(F4). Then the following holds:
\begin{enumerate}[(i)]
\item For $n<p< \infty$, there exist $W^{2,p}$ solution $u$ of \eqref{eq-main-global} with $\phi_1, \phi_2\in C^{1,1}(\overline{D})$, $\partial D \in C^{2,\alpha}$, $g\in C^{2,\alpha}(\overline{D})$, and $\phi_1\le g \le \phi_2$ in $D$.
\item For any compact set $K$ in $D$, we have 
$$\| u\|_{ C^{1,1}(K)}\leq M<\infty,$$
for some constant $M=M(\|u\|_{ L^{\infty}(D)}, \| \phi_1\|_{ C^{1,1}(D)},\|\phi_2\|_{ C^{1,1}(D)}, dist(K, \partial D))>0$.
\end{enumerate}
\end{theorem}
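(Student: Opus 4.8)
The plan is to prove Theorem \ref{thm-main-1} in two stages. For part (i), existence of a $W^{2,p}$ solution, I would use the penalization method announced in the introduction. The idea is to approximate the double obstacle constraint $\phi_1 \le u \le \phi_2$ by a family of semilinear equations $F(D^2 u_\e, x) = \beta_\e(u_\e - \phi_1) - \beta_\e(\phi_2 - u_\e)$ in $D$ with $u_\e = g$ on $\partial D$, where $\beta_\e$ is a suitable smooth penalization function that is increasing, vanishes on $[0,\infty)$ (approximately), and blows up, or — as the paper emphasizes — a \emph{bounded} penalization variant in which $\beta_\e$ is capped so that the right-hand side stays in $L^\infty$ with a bound depending on $\|\phi_i\|_{C^{1,1}}$ and $\|f\|_{L^\infty}$; this boundedness is exactly what is needed to apply the Caffarelli--Escauriaza $W^{2,p}$ estimates (valid under (F4), cf.\ the Remark) uniformly in $\e$. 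First I would establish existence of $u_\e$ for each fixed $\e$ via the method of continuity or Perron's method using (F1)--(F3) (convexity of $F$ gives the needed $C^{2,\alpha}$ a priori estimates for the smooth problem). Then I would derive an $\e$-independent $L^\infty$ bound on $u_\e$ by comparison with barriers built from $\phi_1,\phi_2$ and the harmonic-type extension of $g$, and an $\e$-independent bound on the penalization terms $\beta_\e(u_\e-\phi_1)$ and $\beta_\e(\phi_2-u_\e)$: this is the crux, and it follows by testing the equation against $\phi_1,\phi_2$ as barriers and using $F(D^2\phi_i,x)\in L^\infty$. With these bounds, $\|u_\e\|_{W^{2,p}(K)} \le M$ uniformly, so along a subsequence $u_\e \to u$ weakly in $W^{2,p}_{loc}$ and strongly in $C^{1,\alpha}_{loc}$; passing to the limit in the penalized equation and identifying the limits of the penalization terms (standard: $\beta_\e(u_\e-\phi_1)\to 0$ on $\{u>\phi_1\}$, etc.) yields that $u$ solves \eqref{eq-main-global}.

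For the uniqueness claim, which is stated in the abstract and theorem header though not broken out in the displayed list, I would argue by the comparison principle: if $u_1,u_2$ are two $W^{2,p}$ solutions with the same data, then on the set where both are strictly between the obstacles one compares $F(D^2u_1,x)\ge 0 \ge F(D^2 u_2, x)$ appropriately, and on the contact sets the solution is pinned to $\phi_1$ or $\phi_2$; a careful case analysis using (F2) uniform ellipticity and the maximum principle for $w = u_1 - u_2$ (which lies in $\overline{\mathcal S} \cup \underline{\mathcal S}$ on the relevant regions) shows $w \equiv 0$. This is routine once the solution is known to be a genuine (a.e.) supersolution/subsolution in the right regions.

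For part (ii), the interior $C^{1,1}$ estimate, I would first reduce to the zero-lower-obstacle local form \eqref{eq-ori-local} as in Subsection \ref{rem-red}, so it suffices to show $\|D^2 u\|_{L^\infty(B_{1/2})} \le M$ for a $W^{2,p}$ solution of $F(D^2u,x)=f\chi_{\{0<u<\psi\}}+F(D^2\psi,x)\chi_{\{0<u=\psi\}}$ with $0\le u\le \psi$ in $B_1$, $\psi\in C^{1,1}\cap C^{2,1}(\overline{\{\psi>0\}})$, $f\in C^{0,1}$. The key observation is that $u$ is trapped between $0$ and the $C^{1,1}$ function $\psi$, and that the right-hand side is bounded in $L^\infty$; one then runs the standard obstacle-problem $C^{1,1}$ argument — splitting the analysis near free boundary points of $\Gamma(u)$ (where $u$ touches $0$, comparison with $C^{1,1}$ barriers as in Caffarelli's classical argument adapted to fully nonlinear $F$ via the $W^{2,p}/C^{1,\alpha}$ machinery and a dyadic-scaling/contradiction argument), near points of $\Gamma^\psi(u)$ (where $u$ touches the $C^{1,1}$ obstacle $\psi$; write $v = \psi - u \ge 0$, which solves a translated obstacle-type inequality with bounded data since $\psi\in C^{2,1}$ where it is positive), and in the interior of $\Omega(u)\cap\{0<u<\psi\}$ (where interior $W^{2,p}$ and Evans--Krylov/$C^{2,\alpha}$ estimates give even more). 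The main obstacle in part (ii) is the point where the two free boundaries meet, $\Gamma^d(u)$, and more generally handling the geometry where $\{u=0\}$ and $\{u=\psi\}$ are close: here one must combine both barrier arguments, and the fact that $\psi$ is only $C^{1,1}$ globally (so $F(D^2\psi,x)$ need not be continuous across $\partial\{\psi>0\}$) means the right-hand side is merely $L^\infty$, forcing us to use the $L^p$ theory rather than Schauder; I expect this is where most of the technical work lies. Once the uniform second-derivative bound is in hand on $B_{1/2}$, a covering argument transfers it to any compact $K\subset D$ with the stated dependence of $M$.
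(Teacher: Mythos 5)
Your overall architecture matches the paper's: a bounded penalization plus a fixed-point argument for existence, the comparison principle for uniqueness, and quadratic growth at contact points combined with interior $C^{2,\alpha}$ estimates for the $C^{1,1}$ bound. On part (ii) your plan is essentially the paper's (Propositions \ref{prop-quad-grow} and \ref{prop-optimal-est}): the paper's case split is simply by the distance $d(x_0)$ to the \emph{nearest} contact set versus the distance to $\partial D$, and the quadratic-growth lemma only needs $L^\infty$ bounds on $f$, $F(D^2\psi,x)$ and $\psi$, so the extra difficulty you anticipate near $\Gamma^d(u)$ and from $\psi$ being merely $C^{1,1}$ does not materialize; also, on the ball $B_{d(x_0)}(x_0)$ one has $F(D^2u,x)=0$, so interior Schauder/Evans--Krylov applies directly and no $L^p$ theory is needed there.

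The genuine gap is in the existence step. You identify the ``crux'' as deriving an $\epsilon$-uniform bound on $\beta_\epsilon(u_\epsilon-\phi_1)$ by ``testing the equation against $\phi_1,\phi_2$ as barriers.'' That is the classical argument for the \emph{unbounded} penalization, and it requires evaluating $F(D^2\phi_1,x)$ pointwise at an interior minimum of $u_\epsilon-\phi_1$ --- exactly what fails when $\phi_1$ is only $C^{1,1}$, and the very reason the paper introduces a different scheme. In the paper's construction the bound on the penalization terms is free, since $\beta_\epsilon$ is capped by definition at $-\max\left\{\norm{F(D^2\phi_1,x)}_{L^\infty(D)},\norm{F(D^2\phi_2,x)}_{L^\infty(D)}\right\}$; but then the real issue, which your proposal does not address, is why the limit $u$ satisfies $\phi_1\le u\le\phi_2$ at all --- a bounded penalty does not force the constraint, and ``$\beta_\epsilon(u_\epsilon-\phi_1)\to 0$ on $\{u>\phi_1\}$'' says nothing about the set $\{u<\phi_1\}$. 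The paper's resolution is that the cap is chosen precisely so that on any component of $\{u<\phi_1\}\cap D$ the limit satisfies $F(D^2u,x)\le -\norm{F(D^2\phi_1,x)}_{L^\infty(D)}\le F(D^2\phi_1,x)$, and since $u=\phi_1$ on the boundary of that compactly contained component, the maximum principle forces $u\ge\phi_1$ there, a contradiction. Without this step (or an equivalent one) your limit function is only known to satisfy the differential inequalities, not the obstacle constraint, so the proof of (i) is incomplete as written. A minor further point: for fixed $\epsilon$ the paper gets $u_\epsilon$ via Schauder's fixed-point theorem applied to $v\mapsto w$ solving $F(D^2w,x)=\beta_\epsilon(v-\phi_1)-\beta_\epsilon(\phi_2-v)$, using $W^{2,p}\hookrightarrow C^{0,\alpha}$ compactness; your method-of-continuity/Perron alternative is plausible but would need the semilinear dependence on $u$ handled with the correct monotonicity.
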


\begin{theorem}[Regularity of free boundary] \label{thm-main-2}
Assume $F\in C^1$ satisfies (F1)-(F3) and (F4)' and let $u \in P_1(M)$ with an upper obstacle $\psi$ such that 
$$0\in \partial \Omega(\psi), \quad \lim_{x \to 0, x\in \Omega(\psi)} F(D^2\psi(x),x)>f(0), \quad f\ge c_0>0 \text{ in } B_1,$$
and 
$$ \inf{\left\{F(D^2\psi,x), F(D^2 \psi,x)-f\right\}} \ge c_0>0 \text { in }\Omega(\psi).$$
Suppose
\begin{equation}\label{thi psi zero}
\delta_r(u, \psi):=\dfrac{\text{MD}(\Lambda(u)\cap \Lambda(\psi) \cap B_r)}{r} \ge \epsilon_0 \quad \text{ for all } r< 1/4.
\end{equation}
Then there is $r_0=r_0(u, c_0, \norm{\nabla F}_{L^\infty(B_{M+\|\phi_1\|_{ C^{1,1}(D)}}\times B_1)}, \norm{F}_{L^\infty(B_M+\|\phi_1\|_{ C^{1,1}(D)} \times B_1)},\norm{\nabla f}_{L^\infty(B_1)})>0$, such that $\Gamma(u) \cap B_{r_0}$ is a $C^1$ graph. %, where $M$ is in Theorem \ref{thm-main-1} with $B_1$ and $D$.
\end{theorem}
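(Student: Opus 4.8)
The plan is to follow the standard obstacle-problem blow-up and classification scheme, adapted to the fully nonlinear double obstacle setting, using the half-space classification (Proposition \ref{cla}) as the engine and a directional-monotonicity argument to upgrade convergence of blow-ups to $C^1$ regularity. First I would normalize: after the reduction in Subsection \ref{rem-red}, the problem is posed with zero lower obstacle, $F$ satisfies (F4)', and the hypotheses say that on $\Omega(\psi)$ the right-hand side $F(D^2\psi,x)$ and $F(D^2\psi,x)-f$ are both bounded below by $c_0>0$, with a strict jump $\lim_{x\to 0,\,x\in\Omega(\psi)}F(D^2\psi(x),x)>f(0)$ across the free boundary point $0$. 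The role of the thickness assumption \eqref{thi psi zero} is to guarantee, via a nondegeneracy lemma (nondegeneracy of $u$ away from $\Lambda(u)\cap\Lambda(\psi)$, as in \cite{Caf77,PSU}), that blow-ups of $u$ at $0$ are nonzero, and in fact that a suitable rescaling of $\psi$ at $0$ converges to $\frac{a}{2}(x_n^+)^2$ in an appropriate coordinate frame, so that any blow-up $u_0$ of $u$ at $0$ lies in the global class $P_\infty(M)$ after rotation.

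The key steps, in order, are: (1) \emph{Blow-up and membership in $P_\infty(M)$.} Using the optimal $C^{1,1}$ bound (Theorem \ref{thm-main-1}) one extracts, along any sequence $\rho_i\to 0$, a subsequential $C^{1,\alpha}_{loc}$ limit $u_0$; the thickness hypothesis forces the scaled coincidence set $\Lambda(u_{\rho_i})\cap\Lambda(\psi_{\rho_i})$ to retain MD $\ge\epsilon_0$, hence $u_0$ vanishes (together with its gradient) on a half-space $\{x_n\le 0\}$ in good coordinates, and the scaled obstacle converges to $\frac{a}{2}(x_n^+)^2$, so $u_0\in P_\infty(M)$. (2) \emph{Classification.} By Proposition \ref{cla}, $u_0=\frac{c}{2}(x_n^+)^2$ for some $c>0$; in particular the blow-up is unique up to the choice of axis $e_n=e_n(u_0)$. (3) \emph{Directional monotonicity at small scale.} Here one transfers the monotonicity of the global profile to $u$ itself at a fixed small radius $r_0$: for every direction $e$ in a cone around $e_n$, $\partial_e u\ge 0$ in $B_{r_0}$. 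This is where the $C^1$ assumption on $F$ enters — differentiating the equation, $w=\partial_e u$ solves a linear uniformly elliptic equation with measurable coefficients (of the form $F_{ij}(D^2u,x)\,\partial_{ij}w = -F_{x_k}\cdots$) on $\Omega(u)\cap\{u<\psi\}$, and one shows $w\ge 0$ by a comparison/barrier argument using that $w\ge 0$ on the contact sets (where $u=0$ with $\nabla u=0$, or $u=\psi$ with $\psi$ monotone in that cone near $0$) together with the strict sign of the jump; the constant $c_0$ and the gradient bounds on $F,f$ control the lower-order terms. (4) \emph{Conclusion.} Directional monotonicity in a cone of directions forces $\Gamma(u)\cap B_{r_0}=\partial\{u=0,\nabla u=0\}\cap B_{r_0}$ to be the graph of a function in the $e_n$-direction that is Lipschitz (indeed monotone in each cone direction); then a standard iteration — rescaling at free boundary points, re-applying the blow-up/classification, and showing the axis $e_n(x)$ varies continuously — upgrades Lipschitz to $C^1$, exactly as in \cite{Caf77,PSU,IM16a,LPS}.

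The main obstacle I expect is Step (3): establishing directional monotonicity of $u$ at a \emph{fixed} positive radius $r_0$ without monotonicity formulas, which in the Laplacian case \cite{LPS} are available but here are not. The fully nonlinear analogue must proceed by a compactness/contradiction argument: if no such $r_0$ worked, one would find scales $\rho_i\to 0$ and directions $e_i\to e_n$ with $\partial_{e_i}u_{\rho_i}$ changing sign, pass to the limit to get a global solution whose derivative $\partial_{e_n}u_0$ both vanishes somewhere in $\Omega(u_0)$ and is $\ge 0$, and derive a contradiction with the strong maximum principle (applied to the linearized operator, using $F\in C^1$ so that the coefficients are well-defined along the limit) — precisely the mechanism sketched in Subsection \ref{method plan}, namely that $\partial_e u_0/x_n$ is finite by $C^{1,1}$ and must be identically zero. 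Making the linearization rigorous when $D^2u$ is merely $L^\infty$ (so $F_{ij}(D^2u,x)$ is only measurable) requires care: one works with incremental quotients $u(\cdot+he)-u(\cdot)$ rather than genuine derivatives, which solve equations in the Pucci class $\mathcal S^*(\lambda_0,\lambda_1,\,\cdot\,)$, and invokes the Harnack inequality and the strong minimum principle for $\overline{\mathcal S}$ and $\underline{\mathcal S}$ from \cite{CC}; the bounds $\norm{\nabla F}_{L^\infty}$, $\norm{F}_{L^\infty}$, $\norm{\nabla f}_{L^\infty}$ in the statement are exactly what is needed to control the resulting right-hand sides and thus to make $r_0$ depend only on those quantities and $c_0$.
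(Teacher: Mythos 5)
Your proposal follows essentially the same route as the paper: blow-up at $0$ lands in $P_\infty(M)$ via the thickness and nondegeneracy, Proposition \ref{cla} classifies the limit as a half-space solution through the $\partial_e u_0/x_n$ argument, a Lee--Shahgholian type directional monotonicity lemma (differentiating $F(D^2u,rx)=f(rx)$ and using a minimum-principle barrier, with $\norm{\nabla F}$, $\norm{\nabla f}$, $c_0$ controlling the right-hand side) transfers cone monotonicity to $u$ at a fixed scale, and Lipschitz-to-$C^1$ follows by re-blowing up at nearby free boundary points. The only substantive detail you gloss over, which the paper handles in Lemmas \ref{dir mon}--\ref{pos u}, is using the monotonicity to first conclude $u\ge 0$ near $0$ (so the no-sign problem reduces to \eqref{eq-ori-local} and the blowup is pinned to $\frac12(x_n^+)^2$ rather than $\frac{a}{2}(x_n^+)^2$), but this sits squarely inside the framework you describe.
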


Since Theorem \ref{thm-main-2} is for the reduced forms \eqref{eq-ori-local} and \eqref{eq-main-local} with $\tilde F(M,x)=F(M+D^2 \psi, x)-F(D^2 \psi, x)$, where $F$ is the fully nonlinear operator in \eqref{eq-main-global}, the local regularity of the free boundary for \eqref{eq-main-global} is obtained as a corollary.

\begin{cor}\label{cor-main}
Let $u$, $\phi_1$, and $\phi_2$ be as in Theorem \ref{thm-main-1} and we assume that $\phi_2-\phi_1 \in C^{2, 1}(\overline{ \{\phi_1<\phi_2\}})$ and $\phi_1 \in C^{2, 1}(D)$. Suppose that $0\in \partial \{u>\phi_1\} \cap \partial\{u<\phi_2\}$,
$$0\in \partial \{\phi_1<\phi_2\},\quad \lim_{x \to 0, x\in \{\phi_1<\phi_2\}} F(D^2\phi_2(x),x)>0,\quad -F(D^2\phi_1,x)\ge c_0>0 \text{ in } B_1, $$
and
$$ \inf{\left\{F(D^2\phi_2,x)-F(D^2\phi_1,x), F(D^2 \phi_2,x)\right\}} \ge c_0>0 \text { in }\{\phi_1<\phi_2\}.$$
Suppose
\begin{equation*}
\delta_r(\phi_2-\phi_1, z)\ge \epsilon_0 \quad \text{ for all } r< 1/4, z \in \partial\{\phi_1<\phi_2\}
\end{equation*}
and
\begin{equation*}
\delta_r(u-\phi_1, \phi_2-\phi_1)\ge \epsilon_0 \quad \text{ for all } r< 1/4.
\end{equation*}
Then, there is $r_0=r_0(v-\phi_1, c_0, \norm{\nabla F}_{L^\infty(B_M\times B_1)}, \norm{F}_{L^\infty(B_M\times B_1)},\norm{\nabla F(D^2 \phi_1(x),x)}_{L^\infty(B_1)})>0$ such that $\partial\{ u=\phi_1\} \cap B_{r_0}$ is a $C^1$ graph.
\end{cor}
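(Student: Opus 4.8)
The plan is to derive Corollary \ref{cor-main} from Theorem \ref{thm-main-2} by applying the reduction described in Subsection \ref{rem-red}, checking that every hypothesis of Theorem \ref{thm-main-2} is met by the reduced data. Concretely, set $v := u - \phi_1$ and $\tilde F(\mathcal M, x) := F(\mathcal M + D^2\phi_1(x), x) - F(D^2\phi_1(x), x)$, and put $f := -F(D^2\phi_1, x)$ and $\psi := \phi_2 - \phi_1$. The computation in Subsection \ref{rem-red} shows that $v$ solves $\tilde F(D^2 v, x) = f\chi_{\{0 < v < \psi\}} + \tilde F(D^2\psi, x)\chi_{\{0 < v = \psi\}}$ with $0 \le v \le \psi$, i.e.\ $v$ solves \eqref{eq-ori-local} for the operator $\tilde F$. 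By Theorem \ref{thm-main-1}(ii), $v \in C^{1,1}(K)$ on compact subsets with the stated bound $M$, so $v \in P_1(M)$ once we verify $0 \in \Gamma^d(v)$; but $0 \in \partial\{u > \phi_1\} \cap \partial\{u < \phi_2\}$ translates exactly to $0 \in \partial\{v > 0\} \cap \partial\{v < \psi\} = \Gamma^d(v)$, using that $v > 0 \iff u > \phi_1$ and (together with the thickness hypothesis below) $\Omega(v)$ agrees with $\{v > 0\}$ near $0$. The regularity hypotheses $\psi = \phi_2 - \phi_1 \in C^{2,1}(\overline{\{\phi_1 < \phi_2\}}) = C^{2,1}(\overline{\Omega(\psi)})$ and $\phi_1 \in C^{2,1}(D)$ ensure $\tilde F \in C^1$ and that $\tilde F$ inherits (F1)--(F3): (F1) holds since $\tilde F(0,x) = 0$; uniform ellipticity (F2) with the same constants $\lambda_0, \lambda_1$ follows from that of $F$ by a difference quotient; convexity (F3) in $\mathcal M$ is preserved under the affine shift $\mathcal M \mapsto \mathcal M + D^2\phi_1(x)$; and as noted in the Remark, $\tilde F$ satisfies (F4)$'$ whenever $F$ satisfies (F4).

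Next I would translate the remaining structural hypotheses of Theorem \ref{thm-main-2}. The upper-obstacle conditions there read $0 \in \partial\Omega(\psi)$, $\lim_{x \to 0, x \in \Omega(\psi)} \tilde F(D^2\psi(x), x) > f(0)$, $f \ge c_0 > 0$, and $\inf\{\tilde F(D^2\psi, x),\ \tilde F(D^2\psi, x) - f\} \ge c_0 > 0$ in $\Omega(\psi)$. Using $\tilde F(D^2\psi, x) = F(D^2\psi + D^2\phi_1, x) - F(D^2\phi_1, x) = F(D^2\phi_2, x) - F(D^2\phi_1, x)$ and $f = -F(D^2\phi_1, x)$, one has $\tilde F(D^2\psi, x) - f = F(D^2\phi_2, x)$, so these become precisely the hypotheses stated in the Corollary: $0 \in \partial\{\phi_1 < \phi_2\}$ gives $0 \in \partial\Omega(\psi)$; $\lim_{x\to 0} F(D^2\phi_2(x), x) > 0 = -F(D^2\phi_1(0), 0) + F(D^2\phi_1(0),0)$... more carefully, $\lim \tilde F(D^2\psi, x) - f(0) = \lim F(D^2\phi_2(x),x) > 0$ is exactly the second displayed hypothesis in the Corollary once one observes the $F(D^2\phi_1)$ terms cancel in the limit; $-F(D^2\phi_1, x) \ge c_0$ is $f \ge c_0$; and $\inf\{F(D^2\phi_2,x) - F(D^2\phi_1,x),\ F(D^2\phi_2,x)\} \ge c_0$ is the last infimum condition. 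Finally, the thickness hypothesis \eqref{thi psi zero} of Theorem \ref{thm-main-2} — namely $\delta_r(v, \psi) := \mathrm{MD}(\Lambda(v) \cap \Lambda(\psi) \cap B_r)/r \ge \epsilon_0$ for $r < 1/4$ — follows from the Corollary's two thickness assumptions: $\delta_r(\phi_2 - \phi_1, z) \ge \epsilon_0$ for $z \in \partial\{\phi_1 < \phi_2\}$ controls $\Lambda(\psi)$, while $\delta_r(u - \phi_1, \phi_2 - \phi_1) \ge \epsilon_0$ is by definition $\delta_r(v, \psi) \ge \epsilon_0$. With all hypotheses verified, Theorem \ref{thm-main-2} yields $r_0 > 0$ (with the dependence rewritten in terms of $\nabla F$, $F$ and $\nabla_x F(D^2\phi_1(x), x)$, which is exactly $\nabla(-f)$) such that $\Gamma(v) \cap B_{r_0}$ is a $C^1$ graph; since $\Gamma(v) = \partial\Lambda(v) \cap B_1$ and near $0$ we have $\Lambda(v) = \{v = 0\} = \{u = \phi_1\}$ (the set $\{v = 0\} \cap \{\nabla v = 0\}$ coincides with $\{v = 0\}$ on a neighborhood of $0$ by the thickness/nondegeneracy of the reduced problem), this gives $\partial\{u = \phi_1\} \cap B_{r_0}$ a $C^1$ graph, as claimed.

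The main obstacle is bookkeeping rather than mathematics: one must be careful that the coincidence set $\Lambda(v) = B_1 \setminus (\{v = 0\} \cap \{\nabla v = 0\})$ used in \eqref{eq-main-local} is, near the free boundary point $0$, the same as the naive coincidence set $\{u = \phi_1\}$ appearing in \eqref{eq-main-global}. This requires knowing that on $\{v = 0\}$ one automatically has $\nabla v = 0$ near $0$, which holds because $v \ge 0$ attains its minimum there so $\nabla v = 0$ at interior points of $\{v=0\}$, together with the thickness hypothesis forcing the free boundary to have no isolated non-degenerate behavior; alternatively one invokes that $P_1(M)$-membership is exactly designed so that $\Gamma^d(v)$ and $\Gamma(v)$ behave as in the single-obstacle theory near $0$. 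A secondary point to check is that the $C^1$ regularity of $\phi_1$ and the $C^{2,1}$ regularity of $\phi_2 - \phi_1$ are precisely what is needed for $\tilde F \in C^1$ and for $D^2\psi$ to be $C^{0,\alpha}$ up to $\overline{\Omega(\psi)}$, matching the standing assumption $\psi \in C^{1,1}(B_1) \cap C^{2,1}(\overline{\Omega(\psi)})$ of \eqref{eq-main-local}; this is immediate from the hypotheses as stated. Once these identifications are in place, the Corollary is a direct specialization of Theorem \ref{thm-main-2}.
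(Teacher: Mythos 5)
Your proposal is correct and is exactly the route the paper intends: the paper offers no separate proof of Corollary \ref{cor-main}, presenting it as a direct specialization of Theorem \ref{thm-main-2} under the reduction $v=u-\phi_1$, $\tilde F(\mathcal M,x)=F(\mathcal M+D^2\phi_1,x)-F(D^2\phi_1,x)$, $f=-F(D^2\phi_1,x)$, $\psi=\phi_2-\phi_1$ of Subsection \ref{rem-red}, and your verification that each hypothesis translates (in particular $\tilde F(D^2\psi,x)-f=F(D^2\phi_2,x)$ and $\Lambda(v)=\{u=\phi_1\}$ since $v\ge 0$ is $C^1$) is the bookkeeping the authors leave implicit.
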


\begin{remark}
We assume the thickness of $\Lambda(\psi)$ and $\Lambda(u)$ satisfies the

 assumption \eqref{thi psi zero} in Theorem \ref{thm-main-2}. Then, the assumption implies that
\begin{equation}\label{thi psi0 zero0}
\delta_r(u_0, \psi_0) \ge \epsilon_0 \quad \text{ for all } r>0,
\end{equation}
for any blowups $u_0$ and $\psi_0$ of $u$ and $\psi$ at $0$, respectively. By \eqref{thi psi0 zero0}, we have that the blowups $\psi_0$ of $\psi$ are the half-space type upper obstacle, $\psi_0=\frac{a}{2} (x^+_n)^2$, in an appropriate system of coordinates, see e.g. Proposition 4.7 of \cite{LP}. Furthermore, \eqref{thi psi0 zero0} implies that the blowup $u_0$ of $u\in P_1(M)$ is a nonnegative function and moreover, $u$ is also nonnegative in a neighborhood of $0$, see Section \ref{sec-cla} and \ref{sec-dir}. Thus, the solution $u\in P_1(M)$ to the general problem \eqref{eq-main-local} becomes a solution of \eqref{eq-ori-local}.

In contrast to the Laplacian case in \cite{LPS}, we have the regularity of one of the two free boundaries. Precisely, we only have the regularity of the free boundary $\partial\{u=\phi_1\}$ which is emerged by the  lower coincident set $\{u=\phi_1\}$, not $\partial\{u=\phi_2\}$, for \eqref{eq-main-global} in Corollary \ref{cor-main}. The free boundary $\partial\{u=\phi_1\}$ is corresponding to  $\Gamma(u)$ in Theorem \ref{thm-main-2}, for the general problem \eqref{eq-main-local}. The reason for this result is the lack of the regularity theory for the free boundary of "single" obstacle problem for "concave" fully nonlinear operator, see Remark \ref{rere1}.
\end{remark}

%Suppose
%\begin{equation}\label{uni thi psi}
%\delta_r(\psi, z)\ge \epsilon_0 \quad \text{ for all } r< 1/4, z \in \Gamma(\psi)
%\end{equation}

\section{Existence, Uniqueness and Optimal Regularity}\label{sec-thm1}

%In this section, we discuss existence and optimal regularity of the solution of (FB). The existence of the solution of 'single' obstacle problem and the double obstacle problem (FB) with $C^2$ obstacles by using the penalization method were already considered in \cite{Fri, Lee98, Duq}. The $C^2$ regularity for the obstacles is necessary in the penalization method, since the fact that the Hessian matrix at local minimum of the $C^2$ function is semi-positive definite matrix is used in the theory, see e.g. \eqref{bou}. However, we study the double obstacle problem with non-smooth obstacles $\phi_1$ and $\phi_2$ in this paper. Hence, we apply the penalization method to approximated problem with the mollified obstacles $\phi_1^\epsilon$, $\phi_2^\epsilon$ and pass to a limit as $\epsilon \to 0$ to obtain the solution for the original problem (FB) with non-smooth obstacles $\phi_1$ and $\phi_2$, see Proposition \ref{prop-W2p}.

\subsection{Existence, uniqueness of $W^{2,p}$ solution}\label{sec-exi}

For the single obstacle problem in \cite{Fri, Lee98}, the authors used an unbounded penalization term $\beta_\epsilon(z)$, such that $\beta_\epsilon(z)$ to $- \infty$, for $z<0$, $\epsilon \to 0$. Then, $C^2$ regularity for obstacle function $\phi$ is needed to show that $\beta_\epsilon(u_\epsilon-\phi)$ is bounded, where $u_\epsilon$ is a solution of the penalization problem for the single obstacle problem with the obstacle function $\phi$. 

On the other hand, in this subsection, we consider a penalization problem \eqref{penal} with a new penalty term $\beta_\epsilon$, whose $L^\infty$ norms are uniformly bounded by a constant that depends only on $C^{1,1}$ norms of the obstacle functions $\phi_1$ and $\phi_2$. 

Then, we have solutions $u_\epsilon$ of the penalization problem \eqref{penal} such that $W^{2,\infty}$ norms of $u_\epsilon$ are uniformly bounded. Hence, there is a limit function $u_0$ of $u_\epsilon$ as $\epsilon \to 0$ in $W^{2,p}$ sense. Finally, we prove that the limit function $u_0$ of $u_\epsilon$ is the unique solution of \eqref{eq-main-global} with the obstacle functions $\phi_1\in C^{1,1}$ and $\phi_2 \in C^{1,1}$.

\begin{prop}\label{W2p}
Assume $F$ satisfies (F1)-(F4). For $n<p <\infty$, there is a unique viscosity solution $u \in W^{2,p}(D)$ of (FB) with
$$\norm{u}_{W^{2,p}(D)}\le C\left(\norm{F(D^2 \phi_1,x)}_{L^\infty(D)},\norm{F(D^2 \phi_2,x)}_{L^\infty(D)}\right),$$
where $\phi_1,\phi_2 \in C^{1,1}(\overline D), \partial D \in C^{2,\alpha}$, $g^{2,\alpha} \in C(\overline{D})$, and $\phi_1\le g \le \phi_2$ on $\partial D$.\
\end{prop}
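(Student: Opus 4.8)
The plan is to obtain the solution as a limit of solutions to a penalized problem, exploiting uniform $W^{2,p}$ bounds coming from the bounded penalization term. First I would fix a monotone penalization function $\beta_\e \in C^\infty(\R)$ with $\beta_\e(t)=0$ for $t\ge 0$, $\beta_\e' \ge 0$, $\beta_\e'' \le 0$, and $\beta_\e(t) \to -\infty$ pointwise for $t<0$ as $\e \to 0$, but — and this is the new feature emphasized in the text — with $\|\beta_\e\|_{L^\infty}$ bounded by a constant depending only on $\|F(D^2\phi_1,x)\|_{L^\infty(D)}$, $\|F(D^2\phi_2,x)\|_{L^\infty(D)}$ and the ellipticity constants. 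Concretely one wants $\beta_\e(u-\phi_1)$ to play the role of the ``source'' pushing $u$ above $\phi_1$ and $-\beta_\e(\phi_2-u)$ the role pushing $u$ below $\phi_2$; the cutoff values are chosen so that wherever $u$ would violate an obstacle, $F(D^2u,x)$ is forced to equal (approximately) $F(D^2\phi_i,x)$, hence stays in a fixed bounded range. Then I would solve the penalized Dirichlet problem
\begin{equation*}
F(D^2 u_\e, x) = \beta_\e(u_\e - \phi_1) - \beta_\e(\phi_2 - u_\e) \quad \text{in } D, \qquad u_\e = g \text{ on } \partial D,
\end{equation*}
using the standard theory for uniformly elliptic fully nonlinear equations (Perron's method / fixed point together with (F1)--(F2) and the $C^{2,\alpha}$ boundary data, $\partial D \in C^{2,\alpha}$) to get a viscosity solution $u_\e$, which by convexity (F3) and $\beta_F$-oscillation control (F4) is in $W^{2,p}_{loc}\cap C(\overline D)$ for every $p<\infty$.

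Next I would establish the \emph{uniform} estimates. The $L^\infty$ bound on $u_\e$ follows from the maximum principle comparing with barriers built from $\phi_1,\phi_2,g$. The key point is the uniform $L^\infty$ bound on the right-hand side $h_\e := \beta_\e(u_\e-\phi_1) - \beta_\e(\phi_2-u_\e)$: by the construction of $\beta_\e$ and a pointwise argument (at an interior minimum of $u_\e-\phi_1$, say, one compares $F(D^2u_\e,x)$ with $F(D^2\phi_1,x)$ using ellipticity and the sign of $D^2(u_\e-\phi_1)$), one shows $\|h_\e\|_{L^\infty(D)} \le C(\|F(D^2\phi_1,x)\|_{L^\infty},\|F(D^2\phi_2,x)\|_{L^\infty})$. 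With this in hand, the interior $W^{2,p}$ estimate (Chapter 7 of \cite{CC}, valid under (F3), (F4)) and the global $W^{2,p}$ estimate up to the $C^{2,\alpha}$ boundary with $C^{2,\alpha}$ data give $\|u_\e\|_{W^{2,p}(D)} \le C$ with $C$ of the claimed form, uniformly in $\e$.

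Then I would pass to the limit: by the uniform $W^{2,p}$ bound (with $p>n$, so $W^{2,p}\hookrightarrow C^{1,\beta}$ compactly), extract $u_{\e_j} \to u$ weakly in $W^{2,p}$ and in $C^1(\overline D)$. The uniform convergence forces $\beta_\e(u_\e-\phi_1)\to 0$ wherever $u>\phi_1$ and analogously for the upper obstacle, so in the limit $\phi_1 \le u \le \phi_2$, $u=g$ on $\partial D$, and the two differential inequalities in \eqref{eq-main-global} hold (in the region $\{u>\phi_1\}$ one has $h_{\e_j}\le 0$ in the limit along a subsequence, giving $F(D^2u,x)\ge 0$ a.e., and symmetrically; here stability of viscosity solutions under uniform convergence is used together with the $W^{2,p}$ regularity to interpret things a.e.). Hence $u$ is a $W^{2,p}$ solution of (FB). Finally, uniqueness: if $u,\tilde u$ are two solutions, on the set $\{u>\tilde u\}$ one has, using the obstacle conditions, that $u>\phi_1$ there so $F(D^2u,x)\ge 0$, while on the same set $\tilde u < \phi_2$ so $F(D^2\tilde u,x)\le 0$; by ellipticity (F2) and the convexity (F3) the difference $w=u-\tilde u$ is a subsolution of a uniformly elliptic equation with zero right-hand side structure on $\{w>0\}$, and the maximum principle with $w\le 0$ on $\partial D$ forces $\{u>\tilde u\}=\emptyset$; symmetrically $\{u<\tilde u\}=\emptyset$.

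I expect the main obstacle to be the careful \emph{design of the bounded penalization term} $\beta_\e$ and the accompanying pointwise argument showing $\|h_\e\|_{L^\infty}$ is controlled purely by $\|F(D^2\phi_i,x)\|_{L^\infty}$ rather than by $C^2$ norms of the obstacles — this is precisely the step that replaces the classical unbounded-penalization trick of \cite{Fri,Lee98,Duq}, and it must be done so that the estimate survives the fact that $\phi_1,\phi_2$ are merely $C^{1,1}$ (so $D^2\phi_i \in L^\infty$ only, and the comparison ``at an interior extremum'' has to be carried out in the a.e./viscosity sense). A secondary technical point is checking that the penalized equation retains enough regularity (via (F3)--(F4)) to justify the a.e. manipulations and the global $W^{2,p}$ estimate up to the boundary.
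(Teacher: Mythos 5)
Your overall skeleton (penalize, uniform $W^{2,p}$ bounds from a bounded right-hand side, weak limit, maximum principle for the constraints, comparison principle for uniqueness) matches the paper's, but there is a genuine gap at the heart of the new idea, and your description of $\beta_\epsilon$ is internally inconsistent: you require both $\beta_\epsilon(t)\to-\infty$ pointwise for $t<0$ and $\|\beta_\epsilon\|_{L^\infty}$ uniformly bounded, which cannot both hold. The paper's $\beta_\epsilon$ is genuinely bounded: it saturates at the fixed value $-\max\{\|F(D^2\phi_1,x)\|_{L^\infty(D)},\|F(D^2\phi_2,x)\|_{L^\infty(D)}\}$ for $z<-\epsilon$ and vanishes for $z>\epsilon$. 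With that definition, the uniform bound on the right-hand side $h_\epsilon$ --- which you single out as ``the main obstacle'' and propose to prove by a pointwise comparison at an interior extremum of $u_\epsilon-\phi_1$ --- holds trivially by construction; no extremum argument (the classical device that requires $C^2$ obstacles) is needed, and indeed your version of it would run into exactly the $C^{1,1}$ difficulty you flag.

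The price of a bounded penalization is that the constraint $\phi_1\le u\le\phi_2$ no longer follows from ``the penalty would blow up otherwise,'' and your argument for it (``uniform convergence forces $\beta_\epsilon(u_\epsilon-\phi_1)\to 0$ wherever $u>\phi_1$, so in the limit $\phi_1\le u\le\phi_2$'') does not work: knowing the penalty vanishes on $\{u>\phi_1\}$ says nothing about whether $\{u<\phi_1\}$ is empty. The missing step, which the paper supplies, is: if $\{u<\phi_1\}\cap D\neq\emptyset$, then near any of its points $\beta_\epsilon(u_\epsilon-\phi_1)$ equals the saturation value and $\beta_\epsilon(\phi_2-u_\epsilon)=0$ for small $\epsilon$, hence in the limit $F(D^2u,x)\le-\|F(D^2\phi_1,\cdot)\|_{L^\infty(D)}\le F(D^2\phi_1,x)$ on $\{u<\phi_1\}\cap D$; since $g\ge\phi_1$ on $\partial D$ forces $u=\phi_1$ on $\partial(\{u<\phi_1\}\cap D)$, the comparison principle gives $u\ge\phi_1$ there, a contradiction. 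This is precisely why the saturation level is chosen to be $\max_i\|F(D^2\phi_i,\cdot)\|_{L^\infty(D)}$, and it is the argument you would need to add. (A minor point: existence for the penalized problem, whose right-hand side depends on $u_\epsilon$, is obtained in the paper by a Schauder fixed-point argument on the map $v\mapsto w$ solving $F(D^2w,x)=\beta_\epsilon(v-\phi_1)-\beta_\epsilon(\phi_2-v)$, using that the $W^{2,p}$ bound for $w$ is independent of $v$ and $\epsilon$; your ``Perron/fixed point'' mention is consistent with this.)
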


\begin{proof}
Let $\beta_1(z)\in C^\infty(\R)$ be a function satisfying
\begin{displaymath}
\left\{\begin{array}{l l}
%\beta'_\epsilon(z) \ge 0& \text{ for } z\in \re,\\
\beta_1(z)= -\max\left\{\norm{F(D^2 \phi_1,x)}_{L^\infty(D)}, \norm{F(D^2 \phi_2,x)}_{L^\infty(D)}\right\} & \text{ if } z<-1,\\
\beta_1(z)= 0 & \text{ if } z>1,\\
\beta_1(z)\le 0 & \text{ in } z \in \re,
\end{array} \right.
\end{displaymath}
and define $\beta_\epsilon(z):=\beta_1(z/\epsilon)$, for $\epsilon>0$. We consider a penalization problem,
\begin{equation}\label{penal}
\left\{ \begin{array}{l l}
F(D^2u,x) =\beta_\epsilon (u-\phi_1)-\beta_\epsilon (\phi_2-u) & \quad \text{ in } D,\\
u(x)= g(x) & \quad\text{ on } \partial D.\\
\end{array}\right.
\end{equation}
By the $W^{2,p}$ regularity in \cite{CC} and \cite{Win}, for each $v\in C^{0,\alpha}(D)$ $(0<\alpha<1)$ there is a unique solution $w \in W^{2,p}(D)$ ($n<p< \infty)$, of 
\begin{equation*}
\left\{ \begin{array}{l l}
F(D^2w,x)=\beta_{\epsilon} (v-\phi_1)-\beta_{\epsilon} (\phi_2-v) & \quad \text{ in } D,\\
w(x)= g(x) & \quad \text{ on } \partial D,\\
\end{array}\right.
\end{equation*}
with 
\begin{equation*}
\norm{w}_{W^{2,p}(D)}\le \norm{w}_{L^{\infty}(D)} +\norm{g}_{W^{2,p}(D)}+\norm{\beta_{\epsilon} (v-\phi_1)-\beta_{\epsilon} (\phi_2-v)}_{L^p(D)},
\end{equation*}
By the boundedness of $\beta_{\epsilon}$, we have 

\begin{equation}\label{w2p w}
\norm{w}_{W^{2,p}(D)}\le C_0,
\end{equation}
where $C_0$ is a constant which is independent for $\epsilon$ and $v$. 

Let us consider a map $S$ such that $w=Sv$ for $v\in C^{0, \alpha}(D)$. Since $W^{2,p}$ space is compactly embedded in $C^{0,\alpha}$, the boundedness of $W^{2,p}$ norm of $w$, \eqref{w2p w} implies that $S|_{B_{C_0}} :B_{C_0} \to B_{C_0}$ is a compact map, where $B_{C_0}$ is the $C_0$ ball centered at $0$ in $C^{0,\alpha}(D)$ and $S|_{B_{C_0}}$ is the function $S$ from $B_{C_0}$ to $B_{C_0}$ defined by $S|_{B_{C_0}} (v) = S(v)$. Furthermore, the $W^{2,p}$ estimate implies that $S|_{B_{C_0}}$ is continuous. Hence, by Schauder's fixed-point theorem, there is a function $u_\epsilon \in B_{C_0}$ such that $S|_{B_{C_0}} u_\epsilon = u_\epsilon$, i.e., there is $u_{\epsilon}\in W^{2,p}(D)$ such that $u_{\epsilon}$ is a solution of \eqref{penal} and $\norm{u_{\epsilon}}_{W^{2,p}(D)}\le C_0$, where $C_0$ does not depend on $\epsilon$. Then, there is a sequence $\epsilon=\epsilon_i \to 0$ and $u \in W^{2,p}(D)$ such that
$$u_\epsilon \to u \quad \text{ weakly in } W^{2,p}(D), \quad n<p < \infty.$$
Thus, we have that $\norm{u}_{W^{2,p}(D)}\le C_0$ and
$$u_\epsilon \to u \quad \text{ uniformly in } D.$$

We claim that $u$ is a solution of the double obstacle problem \eqref{eq-main-global}. First, we are going to prove that $F(D^2 u,x) \ge 0$ in $\{u> \phi_1\} \cap D$. Let $x_0$ be a point in $\{u> \phi_1\} \cap D$ and let $\delta= (u(x_0)-\phi_1(x_0))/2$. Then, by the uniform convergence of $u_\epsilon$ to $u$, there is a ball $B_r(x_0) \Subset \{u> \phi_1\} \cap D$ and $\epsilon_0>0$ such that $u_\epsilon-\phi_1 \ge \delta$ in $B_r(x_0)$, for $\epsilon<\epsilon_0$. By the definition of $\beta_\epsilon$, for $\epsilon \le \min \{ \epsilon_0, \delta\}$, we have 
$$\beta_\epsilon(u_\epsilon-\phi_1) \equiv0 \quad \text{ and } \quad F(D^2 u_\epsilon,x) \ge 0\text{ in } B_r(x_0).$$
By the closedness of the family of viscosity solutions, Proposition 2.9 of \cite{CC}, the uniform convergence of $u_\epsilon$ to $u$ implies that $F(D^2 u,x) \ge 0\text{ in } B_r(x_0)$. Since $x_0\in \{u>\phi_1\} \cap D$ is arbitrary, we obtain $F(D^2 u,x) \ge 0$ in $\{u> \phi_1\} \cap D$. We also have $F(D^2 u,x) \le 0$ in $\{u< \phi_2\} \cap D$, from the same argument as above. 

Next, we prove that $\phi_1 \le u \le \phi_2$ in $D$. Suppose that $\{u<\phi_1\} \cap D$ is not empty and let $x_0$ be a point in $ \{ u< \phi_1\} \cap D$ . Then, by the uniform convergence of $u_\epsilon$, there is a ball $B_r(x_0)$ such that 
$$\beta_\epsilon(u_\epsilon-\phi_1)= -\max\left\{\norm{F(D^2 \phi_1,x)}_{L^\infty(D)}, \norm{F(D^2 \phi_2,x)}_{L^\infty(D)}\right \}, \beta_\epsilon(\phi_2-u_\epsilon) \equiv 0 \text{ in } B_r(x_0)$$ and
$$F(D^2u_\epsilon,x) \le F(D^2 \phi_1) \text{ in } B_r(x_0), \text{ for sufficiently small } \epsilon. $$ 
Consequently, $F(D^2u,x) \le F(D^2 \phi_1) \text{ in } \{u< \phi_1\}\cap D$. Moreover, the boundary condition $\psi_1\le u=g$ on $\partial D$ implies $\{u< \phi_1\}\cap D \Subset D$ and $u \equiv \phi_1$ on $\partial (\{ u< \phi_1\} \cap D )$. Hence, by the maximum principle, we have $u \ge \phi_1$ in $\{u< \phi_1\}\cap D$ and it is a contradiction. The same method implies that $\{u> \phi_2\}\cap D= \emptyset$ and $\phi_1 \le u \le \phi_2$ in $D$. Hence, $u$ is a solution of \eqref{eq-main-global}.

In order to prove the uniqueness, we suppose that there are two solutions $u_1$ and $u_2$ of \eqref{eq-main-global} and $\{u_1<u_2\} \cap D$ is not empty. In $\{u_1<u_2\} \cap D$, the conditions $\phi_1 \le u_1 \le \phi_2$ and $\phi_1 \le u_2 \le \phi_2$ in $D$ imply that $\phi_2>u_1$ and $u_2> \phi_1$ and we have $F(D^2u_1,x) \le 0 \le F(D^2u_2,x)$ in $\{u_1<u_2\} \cap D$. Furthermore, by the boundary condition for $u_1$ and $u_2$, we have that $u_1\equiv u_2$ on $\partial (\{u_1<u_2\} \cap D)$. Therefore, by the comparison principle, we have that $u_1\ge u_2$ in $\{u_1<u_2\} \subset D$ and we arrive at a contradiction.
\end{proof}

\subsection{Optimal Regularity}\label{sec-opt}

In this subsection, we prove the optimal regularity of the double obstacle problem \eqref{eq-main-global} with $C^{1,1}$ obstacles by using the reduced form of \eqref{eq-ori-local}. We will first prove the quadratic growth of the solution at the free boundary point. 

\begin{definition}\label{def-setofsol}
For a positive constant $c'$, let $\mathcal G(c')$ be a class of solutions $u \in W^{2,n}(B_1)$ of
\begin{equation}\label{eqeq}
F(D^2 u,x)=f(x)\chi_{\{0< u< \psi\}}+F(D^2 \psi,x)\chi_{\{0< u=\psi\}},\quad 0 \le u \le \psi \text{ in } B_1,
\end{equation}
with $|f(x)|, |F(D^2 \psi,x)|, |\psi| \le c' \text{ in } B_1 $ and $0\in \Gamma(u)$.
\end{definition}

\begin{prop}[Quadratic growth]\label{prop-quad-grow}
Assume $F$ satisfies (F1) and (F2). For any $u\in \mathcal G(c')$, we have
\begin{equation}\label{eq QG}
S(r,u):=\sup_{x\in B_r} u(x) \le C_0 r^2,
\end{equation}
for a positive constant $C_0=C_0(c')$.
\end{prop}

\begin{proof}
First, we show that there is a positive constant $C_0$ such that 
\begin{equation}\label{2u0}
S(2^{-j-1},u) \le max (C_02^{-2j},2^{-2} S(2^{-j},u)) \quad \text{ for all } j \in \N\cup\{0, -1\}.
\end{equation}
Suppose it fails, then, for each $j \in \N\cup\{0, -1\}$, there exists $u_j \in \mathcal G$ such that
\begin{equation}\label{2u}
S(2^{-j-1},u_j) > max (j2^{-2j},2^{-2} S(2^{-j},u_j)).
\end{equation}

We consider
$$\tilde u_j (x) :=\frac{u(2^{-j}x)}{S(2^{-j-1},u)} \quad x \in B_{2^j}.$$
Then, by the definition of $\tilde u$ and $\eqref{2u}$,
$$S(\tilde u_j,1/2)=1, \quad S(\tilde u_j,1)=4, \quad \text{ and } \quad \tilde u_j(0)=0.$$
%\begin{align*}
%S(\tilde u_j,1/2)&=1,\\
%S(\tilde u_j,1)&=4,\\
%\tilde u_j(0)&=0.
%\end{align*}

Since $u \in \mathcal G (c')$, by the condition (F1) and Proposition 2.13 of \cite{CC}, we know that $u \in \mathcal S^*(\frac{\lambda}{n}, \Lambda, c').$ Thus, the inequality \eqref{2u} implies 
$$\mathcal P^{+}(D^2 \tilde u(x))=\frac{2^{-2j}}{S(2^{-j-1},u)} \cdot \mathcal P^{+}(D^2 u(2^{-j}x)) \ge -\frac{c'}{j}$$
and
$$\mathcal P^{-}(D^2 \tilde u(x))=\frac{2^{-2j}}{S(2^{-j-1},u)} \cdot \mathcal P^{-}(D^2 u(2^{-j}x)) \le \frac{c'}{j},$$
where $P^{\pm}$ are Pucci operators, i.e., we obtain that $\tilde u \in \mathcal S^*(\lambda/n, \Lambda, c'/j).$ By Harnack inequality (Theorem 4.3 of \cite{CC}) and $C^{\alpha}$ regularity (Proposition 4.10 of \cite{CC}), we know that $\tilde u_j \to \tilde u$ in ${B_1}$, up to subsequence and
$$\tilde u \in \mathcal S^*(\lambda/n, \Lambda,0) \quad \text{ in } B_1,$$ 
$\tilde u\neq 0$ in $B_{1/2}$, and $\tilde u(0)=0.$ In other words, a nontrivial viscosity solution $\tilde u$ has its minimum at an interior point. Hence, by the strong maximum principle, it is a contradiction.

Next, we claim that 
\begin{equation}\label{2u2}
S(2^{-j},u) \le C_02^{-2j+2}\quad \text{ for all } j \in \N\cup \{0\}.
\end{equation}
We may assume that $C_0>c'/4$. Then, \eqref{2u2} holds for $j=0$. Assume that \eqref{2u2} holds for $j=j_0$. By \eqref{2u0}, we have the inequality \eqref{2u2} for $j_0+1$,
$$S(2^{-(j_0+1)},u) \le max (C_02^{-2j_0},2^{-2} S(2^{-j_0},u))\le C_02^{-2j_0}.$$
Thus, by the mathematical induction, we have \eqref{2u2} for all $j \in \N \cup \{0\}.$

Now, take a positive number $r$, and take a natural number $j$ such that $2^{-j-1}\le r \le 2^{-j}$. Then, by \eqref{2u2}, we have
$$S(r,u) \le S(2^{-j},u) \le C_02^{-2j+2}=C_02^42^{-2j-2}\le C_02^4r^2.$$
Thus, we have the quadratic growth rate \eqref{eq QG} of $u$ at $0$.
\end{proof}

\begin{prop}[Optimal regularity]\label{prop-optimal-est} 
Assume $F$ satisfies (F1)-(F4). Let $u \in W^{2,n}(D)$ be a solution %, for any $n<p< \infty$, 
of (FB), with $\phi_1,\phi_2 \in C^{1,1}(\overline{D})$, $\partial D \in C^{2,\alpha}$, $g \in C^{2,\alpha} (\overline{D})$, and $\phi_1\le g \le \phi_2$ on $\partial D$. Then $u\in W^{2,\infty}_{loc}(D)$.

\end{prop}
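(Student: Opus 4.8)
The plan is to reduce the statement on $(FB)$ to a local, interior statement about the reduced problem \eqref{eq-ori-local} and then upgrade the already-established quadratic growth at free boundary points to a genuine $C^{1,1}$ (i.e.\ $W^{2,\infty}_{loc}$) bound by a standard dichotomy argument. First I would invoke Proposition \ref{W2p} to get that the solution $u$ lies in $W^{2,p}(D)$ for all $n<p<\infty$, hence in $C^{1,\alpha}(\overline D)$; combined with the reduction in Subsection \ref{rem-red}, after subtracting $\phi_1$ and relabeling, it suffices to prove $u\in W^{2,\infty}_{loc}$ for a $W^{2,p}$ solution of \eqref{eq-ori-local} in $B_1$ with $f\in C^{0,\alpha}$ and $\psi\in C^{1,1}\cap C^{2,\alpha}(\overline{\Omega(\psi)})$, the $C^{1,1}$ bound then being local and transferring back to $u$ on compacts $K\Subset D$ with the claimed dependence.

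Next I would split $B_1$ into the three natural regions and handle each separately. In the open set $\{0<u<\psi\}$, the equation is $F(D^2u,x)=f(x)$ with $f\in C^{0,\alpha}$, so the interior $C^{2,\alpha}$ estimate (Chapter 8 of \cite{CC}, using the $\tilde\beta_F$ condition noted in the Remark) gives local $C^{1,1}$ bounds away from the two free boundaries. On the contact set $\{u=\psi\}^\circ$ we have $u\equiv\psi\in C^{2,\alpha}(\overline{\Omega(\psi)})$ (and $D^2u=0$ a.e.\ on the part of $\{u=0\}\cap\{\nabla u=0\}$), so $D^2u$ is bounded there as well. The remaining, and genuinely nontrivial, work is near the free boundary $\Gamma(u)\cup\Gamma^\psi(u)$: there I would show that for any $x_0$ in a compact subset lying on or near the free boundary, the quantity $\sup_{B_r(x_0)}|u-\ell_{x_0}|$, where $\ell_{x_0}$ is the first-order Taylor polynomial of $u$ at $x_0$ (well defined since $u\in C^{1,\alpha}$), is $O(r^2)$ with a uniform constant. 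At a free boundary point of $\Gamma(u)$, $u(x_0)=0$ and $\nabla u(x_0)=0$, so $\ell_{x_0}\equiv0$ and this is exactly the quadratic growth supplied by Proposition \ref{prop-quad-grow} (after rescaling $u$ near $x_0$ into the class $\mathcal G(c')$, with $c'$ controlled by $\|f\|_{L^\infty}$, $\|F(D^2\psi,\cdot)\|_{L^\infty}$ and $\|\psi\|_{L^\infty}$ on compacts — these are finite by the regularity of $\psi$ and $f$). For interior points with $u(x_0)\in(0,\psi(x_0))$ one interpolates between the quadratic growth at the nearest free boundary point and the interior equation; this is the classical Caffarelli dichotomy: either $x_0$ is comparatively far from the free boundary, in which case the rescaled solution solves $F(D^2v,x)=f$ in a full ball and the $C^{2,\alpha}$ estimate applies, or it is close, in which case one compares with the quadratic growth from the free boundary point.

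To turn the pointwise quadratic growth into a pointwise second-derivative bound, I would use the standard consequence that if $u\in C^{1,\alpha}$ and $\sup_{B_r(x_0)}|u-\ell_{x_0}|\le C_0 r^2$ for all small $r$ and all $x_0$ in a neighborhood, and if moreover $F(D^2u,x)$ is bounded a.e., then $u\in C^{1,1}$ there with $\|D^2u\|_\infty\le C(C_0,\lambda_0,\lambda_1,n)$ — this is proved by a blow-up/compactness argument or, more concretely, by freezing coefficients, comparing with a solution of the constant-coefficient equation $F(D^2v,x_0)=F(D^2u,x_0)$ that agrees with $u$ to first order on $\partial B_r(x_0)$, using $C^{2,\alpha}$ estimates plus (F3)-(F4)$'$ to absorb the error, and iterating on dyadic scales. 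The dependence of the final constant $M$ on $\|u\|_{L^\infty(D)}$, the $C^{1,1}$ norms of $\phi_1,\phi_2$, and $\mathrm{dist}(K,\partial D)$ comes out by tracking $c'$ and the size of admissible balls through the reduction and the rescalings.

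The main obstacle I anticipate is the lack of $C^{1,1}$ \emph{a priori} on the contact set interface and the fact that $F$ is only convex, not smooth, so one cannot differentiate the equation: the quadratic-growth-to-$C^{1,1}$ step must be run purely through comparison and scaling estimates rather than differentiation, and the matching of the growth coming from the $\{u=\psi\}$ side (governed by $F(D^2\psi,x)$, which is only $C^{0,\alpha}$ in $x$) with the growth from the $\{0<u<\psi\}$ side (governed by $f$) has to be done carefully so that the constant $C_0$ stays uniform across the free boundary. I would expect this interface analysis, rather than the interior estimates or the quadratic growth lemma itself, to be where the real care is needed; this is presumably why the authors single out $\psi\in C^{2,\alpha}(\overline{\Omega(\psi)})$ and cite the $C^{2,\alpha}$ theory of \cite{CC} and \cite{Win} together with the IM16a/FS14 framework mentioned just above the proposition.
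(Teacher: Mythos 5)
Your overall strategy --- the dichotomy between points near and far from the free boundary, quadratic growth from Proposition \ref{prop-quad-grow} at the nearest free boundary point, and scaled interior estimates in balls avoiding the free boundary --- is exactly the paper's proof. One genuine misstep, though: your reduction does \emph{not} produce data with $f\in C^{0,\alpha}$ and $\psi\in C^{2,\alpha}(\overline{\Omega(\psi)})$. Under the hypotheses of this proposition the obstacles are only $C^{1,1}$, so after subtracting $\phi_1$ the right-hand side is $f=-F(D^2\phi_1,x)\in L^\infty$ and $\psi=\phi_2-\phi_1\in C^{1,1}$, nothing more. Consequently your treatment of the open region $\{0<u<\psi\}$ via the interior $C^{2,\alpha}$ estimate for an equation with H\"older right-hand side is not available (a merely bounded right-hand side does not yield $C^{1,1}$ even for the Laplacian). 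The repair is what the paper does: work with $u$ itself, which satisfies the \emph{homogeneous} equation $F(D^2u,x)=0$ in $\{\phi_1<u<\phi_2\}$, so the Evans--Krylov/$C^{2,\alpha}$ interior estimate (using (F3)--(F4), and valid for $u-\ell$ with $\ell$ affine since $F(0,x)=0$) gives $\|D^2u\|_{L^\infty(B_{d/2}(x_0))}\le C\,d^{-2}\|u-\ell\|_{L^\infty(B_d(x_0))}$, which combines with the quadratic growth of $u-\phi_1$ from the nearest free boundary point $y_0\in\partial B_{d}(x_0)\cap\{u=\phi_1\}$ supplied by Proposition \ref{prop-quad-grow}. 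Once this is in place, your final ``quadratic growth to $C^{1,1}$'' machinery (frozen coefficients, dyadic iteration) is superfluous: the ball $B_{d(x_0)}(x_0)$ lies entirely in $\{\phi_1<u<\phi_2\}$, so a single application of the interior estimate at scale $d(x_0)$ suffices, with the case of $d(x_0)$ comparable to $\mathrm{dist}(K,\partial D)$ handled by the interior estimate alone. The contact sets are handled as you say ($D^2u=D^2\phi_i$ a.e., bounded because $\phi_i\in C^{1,1}$), and the $\partial\{u=\phi_2\}$ side is symmetric via $\phi_2-u$.
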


\begin{proof}
Let $K$ be a compact set in $D$ and $\delta=dist(K, \partial D)$. Since $u \in W^{2,p}(D)$, %for any $n<p<\infty$, 
$D^2u=D^2 \phi_1$ a.e. on $\{u=\phi_1\}$ and $D^2u=D^2 \phi_2$ a.e. on $\{u=\phi_2\}$. Thus, it suffice to show that $\norm{u}_{W^{2,\infty}(\{\phi_1<u <\phi_2\} \cap K)}<+\infty$. Let $x_0$ be a point in $\{\phi_1<u <\phi_2\} \cap K$ and denote $d(x_0):=dist(x_0, \partial \{u=\phi_1\}\cup \partial \{u=\phi_2\})$. We may assume that $d(x_0)=dist(x_0, \partial \{u=\phi_1\})$.

%We are going to apply Proposition \ref{prop-quad-grow} to $u-\phi_1$ and $\phi_2-u$ which are solutions of the problem of the form $\eqref{eqeq}$ (see e.g. Section \ref{rem-red} and Remark \ref{rere}). 

\indent Case 1) $5d(x_0)< \delta. $

For $v:=u-\phi_1$, we have that
\begin{equation*}
\tilde F(D^2v,x)=-F(D^2 \phi_1,x) \chi_{\{0<v< \phi_2-\phi_1\}}+\tilde F(D^2 (\phi_2-\phi_1),x) \chi_{\{0<v=\phi_2-\phi_1\}} \quad \text{ in } D,
\end{equation*}
where $\tilde F(\mathcal M,x)= F(\mathcal M+D^2\phi_1,x)-F(D^2\phi_1,x)$, see Subsection \ref{rem-red}. 

Let $y_0 \in \partial B_{d(x_0)}(x_0) \cap \{u =\phi_1\}$. Then $B_{4d(x_0)}(y_0) \subset B_{5d(x_0)}(x_0) \Subset D.$ Since $\phi_1, \phi_2 \in C^{1,1}(\overline{D})$, we know that $v(4dx+y^0)/(4d)^2$ is in the solution space $\mathcal G(c')$ for a positive number $c'$ for the fully nonlinear operator $\tilde F$ which also satisfies (F1) and (F2). Then, by Proposition \ref{prop-quad-grow}, we obtain
$$\norm{u-\phi_1}_{L^{\infty}(B_{2d}(y_0))} \le C (\norm{\phi_1}_{C^{1,1}(\overline{D})},\norm{\phi_2}_{C^{1,1}(\overline{D})}) d^2.$$
Since $F(D^2u,x)=0$ in $B_{d(x_0)}(x_0) \subset \{\phi_1 <u< \phi_2 \}$, by $C^{2,\alpha}$ estimate, we have that
$$\norm{D^2(u-\phi_1)}_{L^{\infty}(B_{d/2}(x_0))}\le C\dfrac{\Vert u-\phi_1\Vert_{L^{\infty}(B_d(x_0))}}{d^2}.$$
Thus, $B_d(x_0) \subset B_{2d}(y_0)$ implies
\begin{equation*}
\norm{D^2(u-\phi_1)}_{L^{\infty}(B_{d/2}(x_0))}\le C (\norm{\phi_1}_{C^{1,1}(\overline{D})},\norm{\phi_2}_{C^{1,1}(\overline{D})}),
\end{equation*}
and
\begin{equation*}
\norm{D^2 u}_{L^{\infty}(B_{d/2}(x_0))}\le C (\norm{\phi_1}_{C^{1,1}(\overline{D})},\norm{\phi_2}_{C^{1,1}(\overline{D})}).
\end{equation*}

\indent Case 2) $5d(x_0)> \delta$.

The interior derivative estimate for $u$ in $B_{\delta/4}(x_0)$ gives
$$\norm{D^2u}_{L^{\infty}\left(B_{\delta/10}(x_0)\right)}\le C\dfrac{4^2}{\delta^2} \norm{u}_{L^{\infty}(D)}.$$

For the case $d(x_0)=dist(x_0, \partial \{u=\phi_2\})$, the same argument as above with $\phi_2-u$ implies the boundedness of the Hessian matrix of $u$. Therefore, we obtain the optimal regularity of the solution $u$ of \eqref{eq-main-global}.
\end{proof}

We note that the property for the classical single obstacle problem (i.e., \eqref{eq-main-global} with $\phi_2=\infty$ and $F=\Delta$) is obtained in \cite{Caf77, Caf98}. The growth rate for the reduced single obstacle problem (\eqref{eq-ori-local} with $\phi_2=\infty$) is discussed in \cite{LS01}, and the optimal ($p/p-1$) growth rate for the p-Laplacian case is obtained in \cite{KS, LS03}.

For the case \eqref{eq-main-local}, with $f\in C^{0,\alpha}$ and $\psi\in C^{2,\alpha}$, the optimal regularity of the solutions is obtained by the theory in \cite{IM16a} (see \cite{FS14, IM16a} and Theorem 2.1 of \cite{LPS} for more detail).

\section{Regularity of the Free Boundary $\Gamma(u)$}\label{sec RFB}
In this section, we discuss the regularity of the free boundary of the double obstacle problem, \eqref{eq-main-local}. In Subsection \ref{sec-cla}, we show the classification of the global solution, which means that the global solution $u \in P_\infty(M)$ with the upper obstacle $\psi(x)=\frac{a}{2}(x_n^+)^2$ ($a>1$) is $u=\frac{1}{2}(x^+_n)^2$ or $u=\frac{a}{2}(x_n^+)^2$, see Proposition \ref{cla}. In Subsection \ref{sec-dir}, we prove the directional monotonicity of the local solution $u\in P_1(M)$, Lemma \ref{dir mon}. Then, we have that $u\in P_1(M)$ is nonnegative in a small neighborhood $B$ of $0$, and $u$ is a solution of the simple problem \eqref{eq-ori-local} in $B$. Therefore, the blowup $u_0$ of $u$ should be $\frac{1}{2}(x^+_n)^2$, not $\frac{a}{2}(x^+_n)^2$, see Remark \ref{rere}. In other words, we have the uniqueness of the blowup $u_0=\frac{1}{2}(x^+_n)^2$ of $u\in P_1(M)$. Finally, we prove the regularity of the free boundary $\Gamma(u)$, by using the directional monotonicity.

\subsection{Non-degeneracy}
In this subsection, we study the non-degeneracy of the solution $u\in P_1(M)$, which is one of the important properties for solutions of obstacle problems. This property implies that $0$ is also on the free boundary $\Gamma(u_0)$, where $u_0$ is a blow-up of $u$ at $0\in \Gamma(u)$, and $\Gamma(u)$ has a Lebesgue measure zero.

%omit the rest of the proof since it is a repetition of the arguments for the linear case in the proof of .

%We note that the proof of the non-degeneracy is similar to the proof of the problem for Laplacian in Lemma 2.2 of \cite{LPS}.

\begin{lemma}\label{nond}
Assume $F$ satisfies (F1) and (F2). Let $u\in P_1(M)$. If $f\ge c_0 >0$ in $B_1$ and $F(D^2 \psi,x) \ge c_0 >0$ in $\Omega(\psi)$, then
\begin{equation}\label{eq nond}
\sup_{\partial B_r(x)}u \ge u(x)+\frac{c}{8\lambda_1n}r^2 \quad x\in \overline{\Omega(u)}\cap B_1,
\end{equation}
where $B_r(x) \Subset B_1$.
\end{lemma}
\begin{proof}
Let $x_0 \in \Omega(u) \cap B_1$ and $u(x_0)>0$. Since $u\le \psi$, we know that $\{u=\psi\}=\{u=\psi\}\cap \{\nabla u=\nabla \psi\}$ and therefore, $\Omega(u)\cap \{u=\psi\} \subset \Omega(\psi)$. By the assumptions for $f$ and $F(D^2 \psi,x)$, we obtain $F(D^2u,x)=f\chi_{\Omega(u) \cap \{u<\psi\}}+F(D^2 \psi,x) \chi_{\Omega(u)\cap \{u=\psi\}}\ge c_0$ in $\Omega(u)$. Thus, the uniformly ellipticity, (F2) in Definition \ref{def ful} implies
$$F(D^2 w,x) \ge F(D^2 u,x) -c \ge 0 \text{ on } B_r(x_0)\cap \Omega(u),$$ where
$$w(x):=u(x)-u(x_0)-\dfrac{c}{2\lambda_1n}|x-x_0|^2.$$
Since $w(x_0)=0$ and $w(x)<0$ on $\partial \Omega(u)$, the maximum principle on $B_r(x_0) \cap \Omega(u)$ implies
$$\sup_{\partial B_r(x_0) \cap \Omega(u)} w>0 \quad \text{ and } \quad \sup_{\partial B_r(x)}u \ge u(x)+\frac{c}{2\lambda_1n}r^2.$$

Let $x_0\in \Omega(u) \cap B_1$ and $u(x_0)\le 0$. If there is a point $x_1 \in B_{r/2}(x_0)$ such that $u(x_1) >0.$ Then by the first case in the previous paragraph for $x_1$ implies the non-degeneracy for $x_0$.

%$$\sup_{B_r(x^0)} u  \ge \sup_{\partial B_{r/2}(x^1)} u \ge u(x^1)+ \frac{c}{8n}r^2\ge u(x^0)+ \frac{c}{8n}r^2. $$
%Since $u$ is subharmonic, we have the desired inequality.
%$$\sup_{\partial B_r(x^0)} u  =\sup_{B_r(x^0)} u \ge u(x^0)+ \frac{c}{8n}r^2. $$

If $u(x) \le 0$ in $B_{r/2}(x_0)$, then $u(x) \equiv 0$ in $B_{r/2}(x_0)$ or $u(x)<0$ in $B_{r/2}(x_0)$, by the maximum principle. Since $x^0 \in \Omega(u)$, the second case is only possible and in the case, $ F(D^2 u, x) \ge c$ in $B_{r/2}(x_0)$. Then, it implies the non-degeneracy of $u$ at $x_0$.

%By using another auxiliary function $\tilde w(x):=u(x)-\frac{c |x-x_0|^2}{2n}$, we obtain 
%$$\sup_{\partial B_{r/2}(x^0)} \tilde w  = \sup_{B_{r/2}(x^0)} \tilde w  \ge  \tilde w(x^0)=u(x^0) \quad \text{ and } \quad \sup_{\partial B_{r/2}(x^0)} u \ge u(x^0)+ \frac{c}{8 n}r^2.$$
%Finally, the subharmonicity of $u$ implies the desired inequality.

For the case of $x_0\in \partial \Omega(u) \cap B_1$,we take a sequence of points $x_j \in \Omega(u)$ such that $x_j \to x_0$ as $j \to \infty$. By passing to the limit as $j$ goes to $\infty$, we have the desired inequality for $x_0\in \overline{\Omega(u)} \cap B_1.$ 
%We omit the rest of the proof since it is a repetition of the arguments for the linear case in the proof of Lemma 2.2 of \cite{LPS}.
\end{proof}
%We note that the non-degeneracy for the double obstacle problem of Laplacian is discussed in Lemma 2.2 of \cite{LPS}.

By the non-degeneracy and of the solution $u \in P_1(M)$, we have the local porosity of $\Gamma(u)$ and $\Gamma(u)$ has Lebesgue measure zero, e.g. Section 3.2.1 of \cite{PSU}.

\begin{remark}\label{rere}
For $u \in P_1(M)$, by the definition of the rescalings and blowups and the non-degeneracy, we have $\sup_{\partial B_r}u_0 \ge \frac{c}{8\lambda_1n}r^2$, for $r>0$, where $u_0$ is a blowup of $u$ at $0$. Thus, the origin $0$ is on the free boundary $\Gamma(u_0)$ of $u_0$. On the other hand, in general, we do not know that $0\in \Gamma^\psi(u)$ implies $0\in \Gamma^{\psi_0} (u_0)$, for $u \in P_1(M)$. 

However, in the same manner as the linear case in Remark 2.4 of \cite{LPS}, if we assume that $u$ is a non-negative function, then $v:=\psi-u$ is a solution of 
$$\tilde F(D^2v,x)=\left(F(D^2 \psi,x)-f\right)\chi_{\{0< v <\psi\}}+F(D^2 \psi,x) \chi_{\{0<v=\psi\}} \quad \text{ in } B_1,$$
where $\tilde F (M,x):= F(D^2\psi,x)-F(D^2 \psi-M, x)$. Then, if we assume that $F(D^2 \psi,x)-f  \ge c$ and $F(D^2 \psi,x) \ge c$ in $\{ \psi>0\}$, we have the non-degeneracy for $v$ which implies $0\in \Gamma(v_0)= \Gamma^{\psi_0} (u_0)$ and $| \Gamma(v)| = | \Gamma^\psi(u)|=0$. 
\end{remark}

%, although $v$ is not in $P_1(M)$,

\begin{remark}\label{rere1}
We also note that 
$$\tilde F(M,x)=F(D^2\psi,x)-F(D^2\psi-M,x)$$
is a concave fully nonlinear operator. Thus, we can not apply the theory of the obstacle problem for the convex fully nonlinear operator in \cite{Lee98} to $\tilde F(M,x)$. 

Precisely, it is uncertain that we can have Lemma \ref{near zero} for $v$, which is that the blowup of $v$ at $x \in \Gamma(v)=\Gamma^\psi(u)$ near $0$ is of the form $\frac{c}{2}(x_n^+)^2$, for a positive constant $c$. 
Hence, in contrast with linear theory \cite{LPS}, we only have the regularity of the free boundary $\Gamma(u)$, not $\Gamma^\psi (u)$, see Theorem \ref{thm-main-1} and Corollary \ref{cor-main}.
\end{remark}

\begin{lemma}\label{blo1}
Assume $F$ satisfies (F1)-(F3) and (F4)'. Let $u\in P_1(M)$ with an upper obstacle $\psi$ such that 
$$0\in \partial \Omega(\psi), \quad \lim_{x \to 0, x\in \Omega(\psi)} F(D^2\psi(x),x)>f(0), \quad f\ge c_0>0 \text{ in } B_1,$$
and 
$$ F(D^2\psi,x) \ge c_0>0 \text { in }\Omega(\psi).$$
Then $u_0\in P_\infty (M).$
\end{lemma}

\begin{proof}
Let $u_{r_i}$ and $\psi_{r_i}$ be sequences of the rescaling functions converging to blowups, $u_0$ and $\psi_0$, respectively. First, we claim that
$$F(D^2 \psi_0,0) =F(D^2\psi(0),0)\chi_{\Omega(\psi_0) } \quad \text { in } \re^n,$$
where $\Omega(\psi_0)=\re^n \setminus (\{\psi_0=0\}\cap \{\nabla \psi_0=0\})$ and $F(D^2\psi(0),0):=\lim_{x \to 0, x\in \Omega(\psi)} F(D^2\psi(x),x)$.
Let $x$ be a point in $\Omega(\psi_0)$. Then, by $C^{1,\alpha}_{loc}$ convergence of $\psi_{r_i}$ to $\psi_0$, we know that there exist $\delta>0$ and $i_0$ such that $B_{\delta}(x) \subset \Omega(\psi_{r_i})$, for all $i \ge i_0$. Then, by the definition of rescalings $\psi_{r_i}$, we have $r_i x \in \Omega(\psi)$. Furthermore, $\psi\in C^{2,\alpha}(\Omega(\psi))$ implies strong convergence of $\psi_{r_i}$ to $\psi_0$ in $C^{2,\beta}(B_{\delta}(x))$ for some $0<\beta<\alpha$. Thus,
\begin{equation*}
F(D^2\psi_0(x),0)=\lim_{i \to \infty} F(D^2 \psi_{r_i}(x), r_i x)=\lim_{i \to \infty} F(D^2 \psi(r_i x), r_i x)=F(D^2\psi(0),0).
\end{equation*}

Next, we prove that $u_0$ is a solution of 
$$F(D^2 u_0,0) =f(0)\chi_{\Omega(u_0) \cap\{u_0 <\psi_0\} } + F(D^2\psi(0),0) \chi_{\Omega(u_0) \cap \{u_0=\psi_0\}}, \quad u_0\le \psi_0 \quad \text { in } \re^n.$$
The rescaling $u_{r_i}$ is a solution of
$$F(D^2 u_{r_i}, r_ix) =f(r_ix)\chi_{\Omega(u_{r_i}) \cap\{u_{r_i}<\psi_{r_i}\} } + F(D^2\psi_{r_i},r_i x) \chi_{\Omega(u_{r_i}) \cap \{u_{r_i}=\psi_{r_i}\}}, \quad u_{r_i}\le \psi_{r_i} \quad \text { in } B_{1/r_i},$$
where $\Omega(u_{r_i}):=B_{1/r_i} \setminus (\{u_{r_i}= 0\} \cap \{\nabla u_{r_i}=0\})$. Let $x$ be a point in $\Omega(u_0)\cap\{u_{0}<\psi_{0}\}$. Then, by $C^{1,\alpha}_{loc}$ convergence of $u_{r_i}$ to $u_0$, there exist $\delta>0$ and $i_0$ such that $B_{\delta}(x) \subset \Omega(u_{r_i})\cap\{u_{r_i}<\psi_{r_i}\}$, for all $i \ge i_0$. Then 
$$F(D^2u_{r_i}(y))=f(r_ix) \quad \text{ in } B_\delta(x).$$
Since $f\in C^{0,\alpha} (B_1)$, we have $C^{2,\alpha}$ estimates for $u_{r_i}$ and we may assume strong convergence of $u_{r_i}$ to $u_0$ in $C^{2,\beta}(B_{\delta}(x))$ for some $0<\beta<\alpha$. Thus we have $|D^2 u_0(x)|\le M$ and 
\begin{equation*}
F(D^2u_0(x),0)=\lim_{i \to \infty} F(D^2u_{r_i}(x), r_ix)=\lim_{i \to \infty} f(r_i x)=f(0)\ge c_0>0. %\quad \forall x\in G(v_0)
\end{equation*}
Since $F(0,0)=0$, we know that $ F(D^2u_0,0)=0$ a.e. on $\{u_0=0\}$. Moreover, $\Omega(u_0) \cap \{u_0=\psi_0\}\subset \Omega(\psi_0)$ implies that $F(D^2u_0,0)=F(D^2 \psi_0,0)$ in $\Omega(u_0) \cap \{u_0=\psi_0\}$.

Therefore, $u_0$ is a solution of 
$$F(D^2 u_0,0) =f(0)\chi_{\Omega(u_0) \cap\{u_0 <\psi_0\} } + F(D^2\psi(0),0) \chi_{\Omega(u_0) \cap \{u_0=\psi_0\}}, \quad u_0\le \psi_0 \quad \text { in } \re^n,$$
with $f(0)< F(D^2\psi(0),0)$. %\rred{The optimal regularity implies that .} 
Furthermore, by the non-degeneracy, Lemma \ref{nond}, we have $0\in \Gamma(u_0)$, see Remark \ref{rere}. Consequently, $u_0$ is in $P_\infty(M)$ for the fully nonlinear operator $G(M)=F(M,0)/f(0)$.
%For $x\in \{v_0>0\}$, by the strong convergence of $v_{r_i}$ to $v_0$ in $C^2(B_\delta(x))$ for some $\delta>0$, we know . 
\end{proof}

\subsection{Classification of Global Solutions}\label{sec-cla}

In this subsection, we discuss the classification of global solutions, which means that the global solution $u\in P_\infty(M)$ with the upper obstacle $\psi=\frac{a}{2}(x^+_n)^2$ ($a>1$) and the thickness assumption \eqref{thi psi zero}, is $\frac{a}{2}(x^+_n)^2$ or $\frac{1}{2}(x^+_n)^2$.

First, we observe that the zero set of $u$ contains a half plain $\{x_n<0\}$, by the thickness assumption \eqref{thi psi zero} and the non-degeneracy, Lemma \ref{nond}. Thus, the optimal ($C^{1,1}$) regularity for the solution $u\in P_\infty(M)$ implies that $\partial_e u/x_n$ is finite. Moreover, by considering the rescaling functions of $u$ and $\psi$ with a distance from $x\in \{x_n>0\}$ to $\{x_n=0\}$, \eqref{eq resc}, we show that $\partial_e u\equiv 0$, for any direction $e \in \mathbb{S}^{n-1}\cap e_n^{\perp}$. It implies that $u$ is one-dimensional, and it is $\frac{a}{2}(x^+_n)^2$ or $\frac{1}{2}(x^+_n)^2$.

%of the form, $u=c(x_n^+)^2$, for a positive constant $c$.

\begin{prop}\label{cla} 
Assume $F=F(M)$ satisfies (F1)-(F2). Let $u\in P_\infty(M)$ be a solution of 
$$F(D^2 u) = \chi_{\Omega(u)\cap \{u<\psi\}} + a \chi_{\Omega(u) \cap \{u=\psi\}}, \quad u \le \psi \quad \text{ a.e. in } \re^n,$$ 
with the upper obstacle
$$\psi(x)=\frac{a}{2}( x_n^+)^2,$$
for a constant $a>1$. Suppose 
$$\delta_r(u, \psi) \ge 0, \quad \text{ for all } r >0.$$
Then, we have
$$u(x)=\frac{1}{2}( x_n^+)^2 \quad or \quad u (x)=\frac{a}{2}( x_n^+)^2.$$
\end{prop}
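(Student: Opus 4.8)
The plan is to follow the strategy outlined in the paper's introduction: exploit that $u \in P_\infty(M)$ with the thickness assumption must vanish on the half-space $\{x_n \le 0\}$, and then show the solution is one-dimensional by proving each tangential derivative $\partial_e u$ vanishes identically. First I would establish that $\Lambda(u) \supset \{x_n \le 0\}$. The thickness condition $\delta_r(u,\psi) \ge 0$ together with the fact that $\psi = \frac{a}{2}(x_n^+)^2$ vanishes on $\{x_n \le 0\}$ forces $\Lambda(u) \cap \Lambda(\psi)$ to be asymptotically a full hyperplane after blowup, and by non-degeneracy (Lemma \ref{nond}) applied to $v = \psi - u$ as in Remark \ref{rere} (using $u \ge 0$, which holds here since $u_0$ is a blowup), one pins down $\{u = 0, \nabla u = 0\} \supset \{x_n \le 0\}$; in particular $u \equiv 0$ on the closed half-space $\{x_n \le 0\}$.

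Next I would introduce, for a tangential direction $e \in \mathbb{S}^{n-1} \cap e_n^\perp$, the quotient $w := \partial_e u / x_n$ on $\{x_n > 0\}$. By the optimal $C^{1,1}$ regularity ($\|D^2 u\|_\infty \le M$) and $\partial_e u = 0$ on $\{x_n = 0\}$, the function $w$ is bounded: $|w| \le M$ on $\{x_n > 0\}$. The core of the argument is to show $w \equiv 0$. I would differentiate the PDE in the $e$-direction inside the open set $\Omega(u) \cap \{u < \psi\} \cap \{x_n > 0\}$, where $F(D^2 u) = 1$, a constant; since $F = F(M)$ is $x$-independent, $\partial_e u$ satisfies a linear uniformly elliptic equation $F_{ij}(D^2 u)\partial_{ij}(\partial_e u) = 0$ there, hence $\partial_e u \in \mathcal{S}^*(\lambda/n, \Lambda, 0)$ on that region. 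The same holds on $\Omega(u) \cap \{u = \psi\}$, where $u$ coincides with the smooth one-dimensional $\psi$, so $\partial_e u = \partial_e \psi = 0$ there outright. Thus $\partial_e u$ is a bounded solution of a Pucci extremal inequality on all of $\{x_n > 0\}$ and vanishes on the boundary $\{x_n = 0\}$. To push $w$ to be identically zero I would use the rescaling \eqref{eq resc}: at a point $x^0$ with $x^0_n = d > 0$, rescale $u$ and $\psi$ by the factor $d$; the blowup/blowdown limits are again global solutions of the same type with the half-space vanishing property, and the boundedness of $w$ means the limit has $\partial_e(\text{limit}) = cx_n$ behavior near $\{x_n=0\}$ that, combined with the quadratic growth bound $S(r) \le C_0 r^2$, forces $c = 0$; iterating (as in the $\partial_{ee} u$ argument of \cite{LS01} and the $\partial_e u$ argument of \cite{IM16b}) propagates $w \equiv 0$ across $\{x_n > 0\}$.

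Once $\partial_e u \equiv 0$ for every $e \perp e_n$, the function $u$ depends on $x_n$ only, say $u(x) = h(x_n)$ with $h \equiv 0$ on $(-\infty, 0]$, $h \ge 0$, $h \in C^{1,1}$. On $\{x_n > 0\}$ the PDE becomes the ODE $F(\operatorname{diag}(0,\dots,0,h''(x_n))) = 1$ on $\{0 < h < \frac{a}{2}x_n^2\}$ and $= a$ on $\{h = \frac{a}{2}x_n^2\}$. By positive $1$-homogeneity of Pucci-type bounds and uniform ellipticity, $F(\operatorname{diag}(0,\dots,0,t)) = \lambda_+ t$ for $t > 0$ with $\lambda_+ > 0$ a fixed constant (from (F1)–(F2)); since the problem as normalized has $F(D^2\psi) = a$ and $\psi'' = a$, we get $\lambda_+ = 1$, so $h'' = 1$ wherever $0 < h < \psi$, giving $h(x_n) = \frac{1}{2}(x_n - b)^2$ for some $b \ge 0$ on that interval. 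But $h \in C^{1,1}$ with $h \equiv 0$ for $x_n \le 0$ forces $b = 0$, i.e. $h(x_n) = \frac{1}{2}x_n^2$, unless the set $\{0 < h < \psi\}$ is empty, in which case $h = \frac{a}{2}x_n^2$ on $\{x_n > 0\}$. Hence $u = \frac{1}{2}(x_n^+)^2$ or $u = \frac{a}{2}(x_n^+)^2$.

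I expect the main obstacle to be the rigorous proof that the bounded quotient $w = \partial_e u/x_n$ is identically zero — that is, upgrading "$\partial_e u$ is a bounded Pucci solution on a half-space vanishing on the boundary" to "$\partial_e u \equiv 0$". Without a monotonicity formula (unavailable for fully nonlinear $F$), this requires the careful rescaling-at-interior-points device \eqref{eq resc} together with the $C^{1,1}$ bound and the quadratic growth estimate, and one must ensure the blowdown limits retain the half-space vanishing and the thickness property so the induction closes; handling the transition across the contact set $\{u = \psi\}$ (where $\partial_e u$ vanishes but the equation changes) within this scheme is the delicate point.
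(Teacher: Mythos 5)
Your overall strategy is the paper's: show $u\equiv 0$ on $\{x_n\le 0\}$, prove every tangential derivative vanishes, then solve the resulting ODE. But the central step --- upgrading ``$\partial_e u/x_n$ is bounded'' to ``$\partial_e u\equiv 0$'' --- is exactly the part you leave as an acknowledged obstacle, and the mechanism you sketch for it would not close. You say the boundedness of $w=\partial_e u/x_n$ plus the quadratic growth bound $S(r)\le C_0r^2$ forces the linear coefficient $c$ in the limit to vanish; it does not, since the profile the argument must exclude, $\tilde u=M_0x_1x_n+g(x_2,\dots,x_n)$, itself has quadratic growth. What the paper actually does is: set $M_0=\sup_{x_n>0}\partial_e u/x_n$, take a maximizing sequence $x^j$ in $\Omega(u)\cap\{u<\psi\}$, rescale at the projected points $((x^j)',0)$ with scale $r_j=x^j_n$ so that in the limit $\tilde u$ the supremum is \emph{attained} at the interior point $e_n$, i.e.\ $\partial_e\tilde u(e_n)=M_0$ while $\partial_e\tilde u\le M_0x_n$ everywhere. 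Then $F_{ij}(D^2\tilde u)\partial_{ij}(\partial_e\tilde u-M_0x_n)=0$ on the connected component of $\Omega(\tilde u)\cap\{\tilde u<\psi\}$ containing $e_n$, and the \emph{strong maximum principle} gives $\partial_e\tilde u\equiv M_0x_n$ there; since $\partial_e\tilde u=0$ on the complement, the component must be all of $\{x_n>0\}$, whence $\tilde u=M_0x_1x_n+g$, and the $C^1$ matching of $\partial_n\tilde u$ across $\{x_n=0\}$ (where $\tilde u\equiv 0$) forces $M_0x_1+\partial_ng(x_2,\dots,0)=0$ for all $x_1$, hence $M_0=0$. The extremal-sequence rescaling, the strong maximum principle applied to $\partial_e\tilde u-M_0x_n$, and the final $C^{1,1}$ matching (not growth) are the missing ingredients; without them your iteration has no engine.

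Two smaller points. For the first step you invoke ``$u\ge 0$ since $u$ is a blowup,'' but in the proposition $u$ is an arbitrary element of $P_\infty(M)$ and no sign is assumed; the paper instead uses $u\le\psi\le 0$ on $\{x_n\le 0\}$ together with non-degeneracy at a hypothetical free boundary point in $\{x_n<0\}$, and the thickness hypothesis to rule out $\{x_n<0\}\subset\Omega(u)$. In the final ODE step, your identity $F(\operatorname{diag}(0,\dots,0,t))=\lambda_+t$ assumes positive homogeneity of $F$, which is not among (F1)--(F3); what ellipticity actually gives is that $t\mapsto F(te_n\otimes e_n)$ is strictly increasing, so $h''$ is the unique root of $F(te_n\otimes e_n)=1$ on $\{0<h<\psi\}$, and identifying that root with $1$ is a normalization matter rather than a consequence of homogeneity.
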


\begin{proof}
The condition $u \le \psi=\frac{a}{2}( x_n^+)^2$ on $\re^n$ implies that $u(x)\le 0$ on $\{x_n \le 0\}.$ We claim that $\{x_n<0\} \subset \Lambda(u)$. 
First, we suppose that $\partial \Omega(u)\cap \{x_n<0\} \neq \emptyset$. Then, by non-degeneracy, (Lemma \ref{nond}), we have that $\{u>0\}\cap \{x_n<0\} \neq \emptyset$ and we arrive at a contradiction. Next, we suppose that $\{x_n<0\} \subset \Omega(u).$ Since $\{\psi=0\}=\Lambda(\psi)=\{x_n\le 0\}$, it is a contraction to $\delta_r(u, \psi) \ge 0, \text{ for all } r >0.$ 
% Then we know that $u<0$ on $\{x_n<0\}$. 

Therefore, we have that $\{x_n<0\}$ is contained in $\Lambda(u)$. %In other words, we have
%\begin{equation*}
%\Omega(u) \subset \{x_n>0\}
%\end{equation*}
%and 
Hence, $u=0$ on $\{x_n\le 0\}$ and $\partial_e u=0$ on $\{x_n\le 0\}$ for all $e\in \mathbb{S}^{n-1}\cap e_n^{\perp}$, where $\mathbb{S}^{n-1}:=\{x\in \re^n : |x|=1\}$ and $e^\perp :=\{x\in \re^{n} : x \perp e\}$ for $e\in \mathbb{S}^{n-1}$.

In order to have the conclusion, it suffices to show that $\partial_e u \equiv 0$ on $\re^n$, for any direction $e\in \mathbb{S}^{n-1}\cap e_n^{\perp}$. Thus, we fix $e_1\in \mathbb{S}^{n-1}\cap e_n^{\perp}$ and define 
$$0\le \sup_{x\in \{x_n>0\}} \frac{\partial_1 u(x)}{x_n}=:M_0.$$

By the optimal regularity and $\partial_1 u=0$ on $\{x_n\le 0\}$, we know that $M_0$ is finite. If we prove that $M_0$ is $0$, we have that $\partial_1u \equiv 0$ on $\re^n$. Since the direction $e_1$ is arbitrary, we have that $\partial_e u=0$ on $\{x_n\le 0\}$, for all $e\in \mathbb{S}^{n-1}\cap e_n^{\perp}$.

Arguing by contradiction, suppose $M_0>0$. Since $\partial_1 u\equiv 0$ on $(\Omega(u) \cap \{u< \psi\})^c$, we can take a sequence $x^j \in \Omega(u) \cap \{u< \psi\} \subset \{x_n>0\}$ such that 
%$$\lim_{j \to \infty}\frac{1}{x^j_n} \partial_{e^{j}} u(x^j)=M_0.$$ 
%We may assume that $e^j \to e_1$ up to subsequence, Then
%Moreover, up to a subsequence, there exists $e^0 \in \mathbb{S}^{n-1}\cap e_n^{\perp}$ such that $e^j \to e^0 \in \mathbb{S}^{n-1}\cap e_n^{\perp}$ and
%\begin{align*}
%\left|\frac{1}{x^j_n} \nabla' u(x^j) \cdot e^0-M_0 \right|&\le \left|\frac{1}{x^j_n} \nabla' u(x^j) \cdot (e^0-e^j) \right|+\left|\frac{1}{x^j_n} \nabla' u(x^j) \cdot e^j-M_0 \right|\\
%&\le C|e-e^j|+\left|\frac{1}{x^j_n} \nabla' u(x^j) \cdot e^j-M_0 \right| \to 0,
%\end{align*}
%as $j \to \infty$. Thus, up to a rotation, we may assume that $e^0=e_1$ and we obtain
$$\lim_{j \to \infty}\frac{1}{x^j_n} \partial_{1} u(x^j)=M_0.$$

Let $r_j:=x_n^j=|x_n^j|$ and consider rescaling functions
\begin{equation}\label{eq resc}
u_{r_j}(x):=\frac{u\left(((x^j)',0)+r_jx\right)}{(r_j)^2} \quad \text{ and } \quad \psi_{r_j}(x):=\frac{\psi\left(((x^j)',0)+r_jx\right)}{(r_j)^2}=\psi(x).
\end{equation}
Then, $D^2 u_{r_j}$ are uniformly bounded and $u_{r_j}\equiv 0$ on $\{x_n\le 0\}$. Thus,
$$u_{r_j}(x) \to \tilde u(x) \text{ in } C^{1,\alpha}_{loc}(\re^n) \text{ for any } \alpha \in [0,1),$$
\begin{equation}\label{ousub}
\tilde u \equiv 0 \quad \text{ on } \{x_n\le 0\} %(\Omega (\tilde u) \subset \{ x_n>0\})
\end{equation}
and $\tilde u$ is a solution of
$$F(D^2 u) = \chi_{\Omega(u) \cap \{u<\psi\}} + a \chi_{\Omega(u) \cap \{u=\psi\}}, \quad u \le \psi \quad \text{ a.e. in } \re^n,$$ 
with the upper obstacle
$$\psi(x)=\frac{a}{2}( x_n^+)^2.$$
% Since $\partial_1 \psi\equiv 0 $ on $\re^n$ and 
By the definition of $M_0$, for $x\in \{x_n>0\}$,
$$\partial_1 u_{r_j}(x)=\frac{\partial_{1} u\left(((x^j)',0)+r_jx\right)}{r_jx_n}\cdot x_n\le M_0 x_n.$$
Hence, we have $\partial_{1} \tilde u(x) \le M_0 x_n$ on $\{x_n>0\}$.
Moreover,
$$\partial_1 \tilde u(e_n)=\lim_{j\to \infty} \partial_1 u_{r_j}(e_n)=\lim_{j\to \infty}\frac{\partial_1 u\left(((x^j)',0)+r_je_n\right)}{r_j}=\lim_{j\to \infty} \frac{\partial_1 u(x^j)}{x_n^j}=M_0.$$

If $e_n \in (\Omega(\tilde u) \cap \{\tilde u< \psi\})^c$, then $\partial_1 \tilde u (e_n)=0$ and we arrive at a contradiction. Thus, $e_n \in \Omega(\tilde u) \cap \{\tilde u< \psi\}$. Let $\tilde \Omega(\tilde u)$ be the connected component of $\Omega(\tilde u) \cap \{\tilde u< \psi\}$ containing $e_n$. By \eqref{ousub}, we know that $\tilde \Omega(\tilde u)\subset \Omega(\tilde u) \subset \{x_n>0\}$.

By differentiating $F(D^2\tilde u)=1$ on $\tilde \Omega(\tilde u)$ with respect to $e_1$, we have $F_{ij}(D^2\tilde u) \partial_{ij} \partial_{1} \tilde u=0$ and $F_{ij}(D^2\tilde u) \partial_{ij} \partial_{1} (\tilde u- M_0x_n)=0$ on $\tilde \Omega(\tilde u)$. Thus, the strong maximum principle implies that
$$\partial_{1}\tilde u=M_0x_n\quad \text{ on } \tilde \Omega(\tilde u)\subset \{x_n>0\}.$$

If there exists $x\in \partial \tilde \Omega(\tilde u) \cap \{x_n>0\}$, then $\partial_{1} \tilde u(x)=0=Mx_n$ and we have a contradiction, i.e., we have $\partial \tilde \Omega(\tilde u) \cap \{x_n>0\} = \emptyset$. Then, $\tilde \Omega(\tilde u)\subset \{x_n>0\}$ implies $\tilde \Omega(\tilde u)=\{x_n>0\}$, $\partial_{1} \tilde u \equiv M x_n$ on $\{x_n>0\}$ and
$$\tilde u(x)=M_0x_1x_n+g(x_2,...,x_n)\quad \text{ on } \{x_n>0\}.$$
Since $\tilde u$ is in $C^{1,1}_{loc}(\re^n)$ and $\tilde u \equiv 0 \text{ on } \{x_n\le 0\}$, we have
$$\partial_n \tilde u(x)=M_0x_1+\partial_n g(x_2,...,x_n)=0\quad \text{ on } \{x_n=0\}$$
and it does not hold unless $M_0=0$ and $\partial_n g(x_2,...,0)=0$. Hence, we arrive at a contradiction.
\end{proof}

\subsection{Directional Monotonicity and proof of Theorem \ref{thm-main-2}}\label{sec-dir}
In this subsection, we show the directional monotonicity for solutions of \eqref{eq-main-local} and the regularity of the solutions $u \in P_1(M)$. We note that the argument for the linear case is discussed in \cite{LPS}. %and for the single obstacle problem for the fully nonlinear operator, it is explained in \cite{Lee98, FS14, IM16a}. 

%Thus we omit or summarize some proofs in this subsection. 
\begin{lemma}\label{inc dri}
Assume $F\in C^1$ satisfies (F1)-(F3) and (F4)' and let $u$ be a solution of
\begin{equation*}
F(D^2u,rx) =f(rx)\chi_{\Omega(u) \cap \{u<\psi\}} + F(D^2\psi(rx),rx) \chi_{\Omega(u)\cap \{0<u=\psi\}}, \quad u\le \psi \quad \text { in } B_1
\end{equation*}
and assume that $f(x) \ge c_0 >0$ in $B_1$, Suppose that we have
$$C\partial_e \psi -\psi \ge 0, \quad C\partial_e u -u\ge -\epsilon_0 \quad \text{ in } B_1,$$
for a direction $e$ and $\epsilon_0<c/64\lambda_1 n$. Then we obtain
$$C\partial_e u -u \ge 0 \quad \text{ in } B_{1/4},$$
if $0<r\le r'_0$, for some 
$$r'_0=r'_0 (C ,c_0,\norm{\nabla F}_{L^\infty(B_M\times B_1)},\norm{\nabla f}_{L^\infty(B_1)} ).$$
\end{lemma}

\begin{proof}
By differentiating $F(D^2u,rx) =f(rx)$ with respect to the direction $e$, we have
\begin{equation}\label{der u}
F_{ij}(D^2u(x),x) \partial_{ij} \partial_e u(x) = r \partial_e f(rx)-r(\partial_{e} F)(D^2u(x) ,rx) \quad \text{ in } \Omega (u)\cap \{u<\psi\},
\end{equation}
where $\partial_{e} F$ is the spatial directional derivative of $F(M,x)$ in the direction $e$.

Arguing by contradiction, suppose there is a point $y\in B_{1/2} \cap \Omega(u)\cap \{u<\psi\}$ such that $C\partial_e u(y) -u(y) < 0$. We consider the auxiliary function
$$\phi(x)=C\partial_e u(x) -u(x)+\frac{c_0}{4\lambda_1n}|x-y|^2.$$

By Proposition 2.13 of \cite{CC} and the condition $(F1)$ ($F(0,x)=0$ for all $x \in \re^n$), we have $u \in S(\lambda_0/n, \lambda_1, f(rx))$ in $\Omega (u)\cap \{u<\psi\}$ and moreover \eqref{der u} implies $\partial_e u \in S(\lambda_0/n, \lambda_1, r \partial_e f(rx)-r(\partial_{e} F)(D^2u(x) ,rx) )$. Hence, we have 
$$\phi \in \overline{S}(\lambda_0/n, \lambda_1, r \partial_e f(rx)-r(\partial_{e} F)(D^2u(x) ,rx) -f(rx)+c_0/2) \text{ in } \Omega (u)\cap \{u<\psi\}.$$
Furthermore, since there is a sufficiently small constant $\tilde r_0=\tilde r_0(C, c_0, \norm{\nabla f}_{L^\infty(B_1)},$ $\norm{\nabla F}_{L^\infty(B_M\times B_1)})>0$ such that
\begin{align*}
Cr \partial_e f(rx)&-Cr(\partial_{e} F)(D^2u(x) ,rx)-f(rx)+c_0/2 \\
&\le Cr\norm{\nabla f}_{L^\infty(B_1)}+ Cr\norm{\nabla_x F}_{L^\infty(B_M\times B_1)}-f(rx)+c_0/2 \le 0,
\end{align*}
for all $r<\tilde r_0,$ we have
$$\phi(x)\in \overline S (\lambda_0/n, \lambda_1,0) \quad \text{ in }\Omega (u)\cap \{u<\psi\}, \quad \text{ for all } r \le \tilde r_0.$$
By the minimum principle of $\phi$ in $B_{1/4}(y) \cap \Omega(u)\cap \{u<\psi\}$, we have
$$\inf_{\partial (B_{1/4}(y) \cap \Omega(u)\cap \{u< \psi\})} \phi \le \phi(y)<0.$$
Moreover, $C\partial_e \psi -\psi \ge 0\text{ in } B_1$ implies that $\phi \ge0$ on $\partial \left( \Omega(u)\cap \{u< \psi\} \right)$. Thus, we obtain
$$\inf_{\partial B_{1/4}(y) \cap ( \Omega(u)\cap \{u< \psi\})} \phi <0 \quad \text{ and } \quad \inf_{\partial B_{1/4}(y) \cap( \Omega(u)\cap \{u< \psi\})}\left( C\partial_e u-u \right) <-\frac{c_0}{128\lambda_1n}.$$
Since $\epsilon_0<\frac{c_0}{64\lambda_1n}$, we have a contradiction. 
\end{proof}

\begin{lemma}[Directional monotonicity]\label{dir mon}
Let $u$, $\psi$, $F$ be as in Theorem \ref{thm-main-2}. Then for any $\delta \in (0,1]$, there exists 
$$r_\delta=r_\delta(u,\psi, c_0, \norm{\nabla F}_{L^\infty(B_M\times B_1)}, \norm{F}_{L^\infty(B_M\times B_1)},\norm{\nabla f}_{L^\infty(B_1)})>0$$ such that
\begin{align*}
u &\ge 0 \quad \text{ in } B_{r_1}\\
\partial_e u &\ge 0 \quad \text{ in } B_{r_\delta}\quad \text{ for any } e\in C_\delta \cap \partial B_1,
\end{align*}
where
$$C_\delta=\{x\in \re^n : x_n > \delta |x'|\}, \quad x'=(x_1,...,x_{n-1}).$$
\end{lemma}

\begin{proof}
We denote $u_r$ and $\psi_r$ by rescalings of $u_{r_i}$ and $\psi_{r_i}$, respectively. The thickness assumption \eqref{thi psi0 zero0} implies that $\psi_0=\frac{a}{2}(x_n^+)^2$, $(a>1)$, in an appropriate system of coordinates, see e.g. Proposition 4.7 of \cite{LP}. Then, by Proposition \ref{cla}, we know that $u_0$ is $\frac{1}{2}(x_n^+)^2$ or $\frac{a}{2}(x_n^+)^2$. Hence, for any $e\in C_\delta=\{x\in \re^n : x_n > \delta |x'|\}, x'=(x_1,...,x_{n-1})$, 
we obtain 
$$ \delta^{-1} \partial_e \psi_0-\psi_0 \ge 0 \quad \text{ and }\quad \delta^{-1} \partial_e u_0-u_0 \ge 0 \quad \text{ in } \re^n .$$
By the $C^{1, \alpha}$ convergence of $\psi_r$ and $u_r$ to $\psi_0$ and $u_0$, respectively, %$\norm{\psi_r-\psi_0}_{C^{1,\alpha}(B_1)} \to 0$, $\norm{u_r-u_0}_{C^{1,\alpha}(B_1)} \to 0$, 
we obtain that 
$$\delta^{-1} \partial_e \psi_r-\psi_r \ge -\epsilon_0 \quad \text{ and } \quad \delta^{-1} \partial_e u_r-u_r \ge -\epsilon_0 \quad \text{ in } B_1,$$
for $\epsilon_0<c/64\lambda_1 n$ and $r<\hat r_\delta (u, \psi)$. Then, by applying the directional monotonicity for the solution of the single obstacle, Lemma 13 of \cite{IM16a} to $\psi$, we have that $\delta^{-1} \partial_e \psi_r-\psi_r \ge 0$ in $B_{1/2}$ for all $r<r'_\delta(u,\psi, c_0,\norm{\nabla F}_{L^\infty(B_M\times B_1)},\norm{\nabla f}_{L^\infty(B_1)})$. Furthermore, by Lemma \ref{inc dri}, we have 

\begin{equation}\label{direc ur}
\delta^{-1} \partial_e u_r -u_r \ge 0 \quad \text{ in } B_{1/4},
\end{equation}
for $0<r\le \tilde r_\delta=\tilde r_\delta (u, \psi, c_0,\norm{\nabla F}_{L^\infty(B_M\times B_1)},\norm{\nabla f}_{L^\infty(B_1)} )$.

We claim that $u_r=0$ in $\{x_n<-1/8\}\cap B_{1/4}$. By the $C^{1,\alpha}$ convergence of $u_r$ to $u$, we may assume that $\norm{u_r-u_0}_{L^\infty(B_1)}\le \frac{c}{8\lambda_1n}\times \frac{1}{128}$ for $0<r\le \tilde r_\delta$. Let $x_0$ be a point in $\{x_n< 0\}\cap \Omega(u_r) \cap B_{1/4}$. By the non-degeneracy, Lemma \ref{nond}, we have 
\begin{equation}\label{nond ur}
\sup_{\partial B_\rho(x_0)}u_r \ge u_r(x_0)+\frac{c}{8\lambda_1n}\rho^2,
\end{equation}
where $\rho: = |(x_0)_n|$. Since $u_0=0$ in $\{x_n\le 0\}$ implies $\norm{u_r-u_0}_{L^\infty(\partial B_\rho(x_0))}=\norm{u_r}_{L^\infty(\partial B_\rho(x_0))}\le \frac{c}{8\lambda_1n}\times \frac{1}{128}$, by \eqref{nond ur}, we have $\frac{c}{8\lambda_1n}\rho^2 \le \frac{c}{4\lambda_1n}\times \frac{1}{128} $ and $\rho \le \frac{1}{8}$. Therefore, $\{x_n< 0\}\cap \Omega(u_r)\cap B_{1/4} \subset \{-1/8 < x_n<0\}$ and $u_r=0$ in $\{x_n<-1/8\}\cap B_{1/4}$.

Let us fix $\delta=1$ and let $v(x)=exp(- e \cdot x) u_r(x)$. Then, by \eqref{direc ur}, we have $\partial_e v\ge 0$ in $B_{1/4}$ for any $e\in C_\delta$. Thus, $u_r=0$ in $\{x_n<-1/8\}\cap B_{1/4}$ implies that $v=0$ in $\{x_n<-1/8\}\cap B_{1/4}$, $v\ge 0$ in $B_{1/4}$, and $u_r \ge 0$ in $B_{1/4}$ for all $r\le \tilde r_1$. By the definition of scaling functions $u_r$, we have $u \ge 0$ in $B_{r_1}$, for $r_1=\frac{1}{4\tilde r_1}$. Furthermore, \eqref{direc ur} implies that 
$$\partial_e u \ge 0 \quad \text{ in } B_{r_\delta}\quad \text{ for any } e\in C_\delta \cap \partial B_1,$$
for $r_\delta=\frac{1}{4\tilde r_\delta}<r_1, \delta \in (0,1]$.
\end{proof}

%\subsection{Proof of Theorem \ref{thm-main-2}}\label{sec reg loc}

\begin{lemma}\label{pos u}
Let $u$, $\psi$, $F$ be as in Theorem \ref{thm-main-2}. Then there exists 
$$r_1=r_1(u,\psi, c_0, \norm{\nabla F}_{L^\infty(B_M\times B_1)}, \norm{F}_{L^\infty(B_M\times B_1)},\norm{\nabla f}_{L^\infty(B_1)})>0$$ such that $u$ is a solution of
$$F(D^2 u,x) =f\chi_{\{ 0< u < \psi \}} +F(D^2 \psi,x) \chi_{\{0< u = \psi\}}, \quad 0\le u \le \psi \quad \text{ in } B_{r_1}.$$
%and $v:=\psi-u$ is a solution of 
%$$\La v =(\La \psi-f)\chi_{\{ 0< v < \psi \}} +\La \psi \chi_{\{0< v = \psi\}} \quad 0\le v \le \psi \quad \text{ in } B_{r_1}.$$
Moreover, if $u_0$ and $\psi_0$ are blowup functions of $u$ and $\psi$ at $0$, respectively, then in an appropriate system of coordinates,
$$ \psi_0(x)=\frac{a}{2}( x_n^+)^2\quad \text{ and } \quad u_0(x)=\frac{1}{2} (x_n^+)^2.$$
\end{lemma}

\begin{proof}
By Lemma \ref{dir mon}, there is $r_1=r_1(u, \psi, c_0, \norm{\nabla F}_{L^\infty(B_M\times B_1)}, \norm{F}_{L^\infty(B_M\times B_1)},\norm{\nabla f}_{L^\infty(B_1)})>0$ such that $u\ge0$ in $B_{r_1}$. Hence $u$ is a solution of
$$F(D^2 u,x) =f\chi_{\{ 0< u < \psi \}} +F(D^2 \psi,x) \chi_{\{0< u = \psi\}}, \quad 0\le u \le \psi \quad \text{ in } B_{r_1}.$$
and $v:=\psi-u$ is a solution of 
$$\tilde F(D^2v,x)=\left(F(D^2 \psi,x)-f\right)\chi_{\{0< v <\psi\}}+F(D^2 \psi,x) \chi_{\{0<v=\psi\}} \quad 0\le v \le \psi \quad \text{ in } B_1,$$
where $\tilde F (M,x):= F(D^2\psi,x)-F(D^2 \psi-M, x)$. Since $0\le v \le \psi$, we have that $\{v>0\} \subset \{\psi>0\}=\Omega(\psi)$. Thus, $\min\{F(D^2 \psi,x) , F(D^2 \psi,x)-f \}\ge c_0>0$ in $\Omega(\psi)$ implies
$$\tilde F(D^2 v,x)=(F(D^2 \psi,x)-f)\chi_{\{ 0< v < \psi \}} +F(D^2 \psi,x) \chi_{\{0< v = \psi\}}\ge c_0>0 \quad \text{ in } \{v>0\}.$$
Thus, by the same argument in Lemma \ref{nond}, we have the non-degeneracy for $v$, 
\begin{equation*}
\sup_{\partial B_r(x)}v \ge v(x)+\frac{\lambda}{8n}r^2 \quad x\in \overline{\Omega(v)}\cap B_{r_0},
\end{equation*}
for $B_r(x) \Subset B_{r_0}$.
This implies $0 \in \Gamma(v_0)=\Gamma^{\psi_0}(u_0)$, where $v_0$ is a blowup functions of $v$ at $0$ such that $v_0=\psi_0-u_0$, see Remark 3.2. Consequently, we have
$$ \psi_0(x)=\frac{a}{2}( x_n^+)^2 \quad \text{ and } \quad u_0(x)=\frac{1}{2} (x_n^+)^2 $$
in an appropriate system of coordinates. 
%Thus by the same way for $u$, we have the directional monotonicity for $v$, i.e.,
%Then for any $\delta \in (0,1]$ there exists $r'_\delta=r'_\delta(v)>0$ such that
%\begin{align*}
% v &\ge 0 \quad \text{ in } B_{r'_\delta}\\
%\partial_e v &\ge 0 \quad \text{ in } B_{r'_\delta}\quad \text{ for any } e\in C_\delta.
%\end{align*}
\end{proof}

By the uniqueness of the blowup for the single obstacle problem, we have the uniqueness of blowup for $\psi$, i.e., for any sequence $\lambda \to 0$, 
$$\psi_{\lambda} \to \psi_0=\frac{1}{2}(x_n^+)^2 \quad \text{ in } C^{1,\alpha}_{loc}(\re^n),$$ 
in an appropriate system of coordinates. Then, the uniqueness of the blowup for $u$ directly follows from Lemma \ref{pos u}.
%By Lemma \ref{dir mon} and Lemma \ref{pos u}, we have the uniqueness of blowup. 
\begin{prop}[Uniqueness of blowup]
Let $u, \psi, F$ be as in Theorem \ref{thm-main-2}. Then the blowup function of $u$ at $0$ is unique, i.e., in an appropriate system of coordinates,
for any sequence $\lambda \to 0$, 
$$u_{\lambda} \to u_0=\frac{1}{2}(x_n^+)^2 \quad \text{ in } C^{1,\alpha}_{loc}(\re^n)$$ 
as $\lambda \to 0$.
\end{prop}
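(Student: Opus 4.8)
The plan is to deduce the uniqueness of the blowup of $u$ at $0$ from the (already recorded) uniqueness of the blowup of $\psi$ at $0$, using Lemma~\ref{pos u} to pin down the coordinate system. As noted just above the statement, the single obstacle theory gives that $\psi$ has a unique blowup at $0$: there is a fixed unit vector, which we take to be the direction $e_n$, such that $\psi_\lambda \to \frac{a}{2}(x_n^+)^2$ in $C^{1,\alpha}_{loc}(\re^n)$ along \emph{every} sequence $\lambda \to 0$.

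Now fix an arbitrary sequence $\lambda_j \to 0$. By the optimal $C^{1,1}$ estimate of Theorem~\ref{thm-main-1}, the rescalings $u_{\lambda_j}$ are uniformly bounded in $C^{1,1}_{loc}(\re^n)$, so some subsequence $u_{\lambda_{j_k}}$ converges in $C^{1,\alpha}_{loc}(\re^n)$ to a blowup $w$ of $u$ at $0$; passing to a further subsequence we may also assume $\psi_{\lambda_{j_k}} \to \psi_\ast$ in $C^{1,\alpha}_{loc}(\re^n)$, so that $\psi_\ast$ is a blowup of $\psi$ at $0$ along the same radii. By Lemma~\ref{pos u}, in some orthonormal coordinate system $y$ one has simultaneously $w = \frac{1}{2}(y_n^+)^2$ and $\psi_\ast = \frac{a}{2}(y_n^+)^2$. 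On the other hand, by the first step $\psi_\ast = \frac{a}{2}(x_n^+)^2$; since $a \neq 0$, comparing the two expressions for $\psi_\ast$ forces $y_n^+ \equiv x_n^+$, hence $y_n \equiv x_n$ (a linear function agreeing with $x_n$ on the open half-space $\{x_n>0\}$), and therefore $w = \frac{1}{2}(x_n^+)^2$.

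Since the limit $w$ does not depend on the chosen subsequence, the whole sequence $u_{\lambda_j}$ converges to $\frac{1}{2}(x_n^+)^2$ in $C^{1,\alpha}_{loc}(\re^n)$; as $\lambda_j \to 0$ was arbitrary, this shows that the blowup of $u$ at $0$ is unique and equal to $\frac{1}{2}(x_n^+)^2$, which is the assertion.

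The only genuine obstacle is the a priori dependence of the half-space direction of a blowup of $u$ on the chosen subsequence; this is precisely what is eliminated by tying that direction to the unique blowup direction of $\psi$ via Lemma~\ref{pos u}. The remaining ingredients are soft: Arzel\`a--Ascoli compactness from the uniform $C^{1,1}$ bound, and the routine bookkeeping needed to guarantee that a blowup sequence of $u$, after passing to a further subsequence, simultaneously produces a genuine blowup of $\psi$ along the same radii, so that the common coordinate system of Lemma~\ref{pos u} can indeed be identified with the fixed one coming from $\psi$.
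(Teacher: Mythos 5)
Your argument is correct and is essentially the paper's own proof: the authors likewise obtain the uniqueness of the blowup of $\psi$ from the single obstacle theory and then state that the uniqueness for $u$ ``directly follows from Lemma \ref{pos u}''. You have simply filled in the standard subsequence--compactness bookkeeping that the paper leaves implicit.
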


In the following lemma, we have that the blowups of $u$ for any points $x$ near $0$ are also half-space functions.
\begin{lemma}\label{near zero}
Let $u, \psi, F$ and $r_1$ be as in Theorem \ref{thm-main-2}. Then there is $r'_1=r'_1(u,\psi)>0$ such that the blowup function of $u$ at $x\in \Gamma(u) \cap B_{r'_1}$ are half-space functions.
\end{lemma}

\begin{proof}
%By Lemma \ref{dir mon} and \ref{pos u}, we have 
By the directional monotonicity for $u$ and $\psi$ (Lemma \ref{dir mon} and \ref{pos u}), we have that,
%and the fact that $u$ is nonnegative in $B_{r_1}$ for some $r_1=r_1(u, c_0, \norm{\nabla F}_{L^\infty(B_M\times B_1)}, \norm{F}_{L^\infty(B_M\times B_1)},$ $\norm{\nabla f}_{L^\infty(B_1)})>0$. Thus $u$ is a solution of 
%$$F(D^2 u,x) =f\chi_{\{ 0< u < \psi \}} +F(D^2 \psi,x) \chi_{\{0< u = \psi\}} \quad 0\le u \le \psi \quad \text{ in } B_{r_1}$$
for any $\delta \in (0,1]$, there exists 
$$r_\delta=r_\delta(u, c_0, \norm{\nabla F}_{L^\infty(B_M\times B_1)}, \norm{F}_{L^\infty(B_M\times B_1)},\norm{\nabla f}_{L^\infty(B_1)})>0$$ such that
$r_1\ge r'_\delta=r'_\delta(u,\psi)>0$ and
\begin{align*}
\psi,u &\ge 0 \quad \text{ in } B_{r'_1}\\
\partial_e \psi, \partial_e u &\ge 0 \quad \text{ in } B_{r'_\delta}\quad \text{ for any } e\in C_\delta.
\end{align*}
%With out loss of generality, we assume $r'_1\ge r_1$.\\
Then, the free boundaries $\partial \{u=0\}\cap B_{r'_1}=\Gamma(u) \cap B_{r'_1}, \partial \{\psi=0\}\cap B_{r'_1}$ are represented by Lipschitz functions. %see e.g. Proposition 4.8 of \cite{PSU}.

Let $x^0$ be a point in $\Gamma(u) \cap B_{r'_1}$ and assume that there exists $r_0>0$ such that 
$$\{u=\psi\}\cap B_r(x^0)\neq \emptyset \quad \text{ for all } r< r_0.$$
Then, there is a sequence of points $x^j$ such that $x^j\in \{u=\psi\}$ and $x^j \to x^0$ as $j \to \infty.$ Thus,
$$\psi(x^j)=u(x^j)\to 0 \quad \text{ as } j \to \infty,\quad \text{ and } \quad x^0\in \{\psi=0\}.$$
Since $u$ is nonnegative in $B_{r'_1}$, we have $0\le u \le \psi$ and $\{\psi=0\}\subset \{u=0\}$ in $B_{r'_1}$. Thus, $x^0\in\Gamma(u) \cap B_{r'_1}$ implies $x^0\in \partial \{\psi=0\}$. Furthermore, by the Lipschitz regularity of the zero set of $\psi$, $\{\psi=0\}$ and the positivity of $u$, ($0\le u\le \psi$), we obtain
\begin{equation*}
\delta_r(u, \psi,x_0) = \delta_r(\psi,x_0) \ge \epsilon_0 \quad \text{ for all } r< 1/4.
\end{equation*}
Then, by classification of blowup, Proposition \ref{cla}, we know that the blowup of $u$ at $x^0$ is a \emph{half-space solution}, which means that it is of the form $\frac{c}{2}(x_n^+)^2$, for a positive constant $c$.

Next, we assume that, for $x^0\in \Gamma(u)\cap B_{r'_1}$, there exists $r_0>0$ such that 
$$\{u=\psi\}\cap B_{r_0}(x^0)= \emptyset.$$
Then $u$ is a solution of
$$F(D^2 u,x) = f\chi_{\{u>0\}}, \quad u\ge0 \quad \text{ in } B_{r_0}(x^0).$$
On the other hand, Lipschitz regularities of $\Gamma(u)$ implies the thickness assumption for $u$ near $x_0$. Then, %by Theorem 3.9 of \cite{IM14}, 
the blowup function of $u$ at $0$ is a half-space solution.
\end{proof}

By using lemmas in this subsection and arguments in \cite{Lee98, PSU, IM16a, LPS}, we prove one of the main theorems of the paper, Theorem \ref{thm-main-2}.

%to prove the main theorem and the lest of proof follows arguments 

%Hence we present the sketch of the proof of .

\begin{proof}[Proof of Theorem \ref{thm-main-2}]
The directional monotonicity for $u$, Lemma \ref{dir mon}, implies that the free boundary $\Gamma(u)\cap B_{r\delta/2}$ is represented as a graph $x_n=f(x')$ for Lipschitz function $f$ and the Lipschitz constant of $f$ is less than $\delta$ in $B_{r\delta/2}$. Since $\delta>0$ can be chosen arbitrary small, we have a tangent plane of $\Gamma(u)$ and the normal vector $e_n$ at $0$. By Lemma \ref{near zero}, for any $z \in \Gamma(u) \cap B_{r'_1}$, we know that the blowup is the half-space solution and there is a tangent plane for $z\in \Gamma(u) \cap B_{r'_1}$ with tangent vector $\nu_z$. By Lemma \ref{dir mon}, for $z\in \Gamma(u) \cap B_{r_\delta}$, we have $\nu_z \cdot e \ge 0$ for any $e\in C_\delta$. Hence, $\nu_z$ is in $C_{1/\delta}$ and then, for sufficiently small $\delta>0$, $\nu_z$ is close to $e_n$. Specifically, we have that
$$|\nu_z-e_n| \le C \delta, \quad z\in \Gamma(u) \cap B_{r_\delta}.$$
Therefore, $\Gamma(u)$ is $C^1$ at $0$ and by the same argument, $\Gamma(u) \cap B_{r'_1}$ is $C^1$.
\end{proof}

%\section*{Acknowledgement}
%\acks
%We thank Henrik Shahgholian for helpful discussions on the double obstacle problem. Ki-Ahm Lee also holds a joint appointment with the Research Institute of Mathematics of Seoul National University. This work was supported by the National Research Foundation of Korea (NRF) grant funded by the Korean government (MSIP) [NRF-2020R1A2C1A01006256 to K.-A.L.]; and Basic Science Research Program through the National Research Foundation of Korea (NRF) funded by the Ministry of Education [2020R1A6A3A01099425 to J.P.].

%the National Institutes of Health [AA123456 to C.S., BB765432 to M.H.]; and the Alcohol & Education Research Council [hfygr667789].'

%Ki-Ahm Lee has been supported by the National Research Foundation of Korea (NRF) grant funded by the Korean government (MSIP) (No. NRF-2020R1A2C1A01006256). Ki-Ahm Lee also holds a joint appointment with the Research Institute of Mathematics of Seoul National University. This research was supported by Basic Science Research Program through the National Research Foundation of Korea(NRF) funded by the Ministry of Education(2020R1A6A3A01099425).

%
\section*{Acknowledgement}
We thank Henrik Shahgholian for helpful discussions on the double obstacle problem. 
Ki-Ahm Lee has been supported by the National Research Foundation of Korea (NRF) grant funded by the Korean government (MSIP) (No. NRF-2020R1A2C1A01006256). Ki-Ahm Lee also holds a joint appointment with the Research Institute of Mathematics of Seoul National University. This research was supported by Basic Science Research Program through the National Research Foundation of Korea(NRF) funded by the Ministry of Education(2020R1A6A3A01099425).

%-----------------------------The-end-of-document--------------------------------

%\bibliography{References}

\begin{thebibliography}{0}

\bibitem{Ale}
Aleksanyan, G.
``Analysis of blowups for the double obstacle problem in dimension two."
\textit{Interfaces and Free Boundaries} 21, no. 2 (2019): 131--167.

\bibitem{ALS}
Andersson, J., E. Lindgren, and H. Shahgholian,
``Optimal regularity for the no-sign obstacle problem."
\textit{Comm. Pure Appl. Math.} 66, no. 2 (2013): 245--262.


\bibitem{Caf77}
Caffarelli, L. A.
``The regularity of free boundaries in higher dimensions."
\textit{Acta Math.} 139, no. 3-4 (1977): 155--184.

\bibitem{Caf98}
Caffarelli, L. A.
``The obstacle problem revisited."
\textit{J. Fourier Anal. Appl.} 4, no 4-5 (1998): 383--402.

%The obstacle problem revisited}

\bibitem{CC}
Caffarelli, L. A. and X. Cabr\'{e}.
\textit{Fully nonlinear elliptic equations},
American Mathematical Society, Providence, RI, 1995. 


\bibitem{CKS} 
Caffarelli, L. A., L. Karp, and H. Shahgholian.
``Regularity of a free boundary with application to the Pompeiu problem."
\textit{Ann. of Math.} (2000): 269--292.


\bibitem{FS14}
Figalli, A. and H. Shahgholian.
``A general class of free boundary problems for fully nonlinear elliptic equations."
\textit{Arch. Ration. Mech. Anal.} 213, no. 1 (2014): 269--286.


\bibitem{Fri}
Friedman, A.
\textit{Variational principles and free-boundary problems},
New York: Wiley, 1982.


\bibitem{IM16a}
Indrei, E and A. Minne.
``Regularity of solutions of fully nonlinear elliptic and parabolic free boundary problems."
\textit{Ann. Inst. H. Poincar$\acute{e}$ Anal. Non Lin$\acute{e}$aire} 33 , no. 5 (2016): 1259--1277.


\bibitem{IM16b}
Indrei, E and A. Minne.
``Non-transversal intersection of free and fixed boundary for fully nonlinear elliptic operators in two dimensions."
\textit{Anal. PDE} 9, no. 2 (2016): 487--502. 


\bibitem{Iva}
Ivan, B.
``Sharp results for the regularity and stability of the free boundary in the obstacle problem."
\textit{Indiana Univ. Math. J.} 50, no.3 (2001): 1077--1112.

\bibitem{KS}
Karp, L. and H. Shahgholian.
``On the optimal growth of functions with bounded Laplacian."
\textit{Electronic J. Differential Equations} 2000, no.3 (2000): 1--9.


\bibitem{Lee98}
Lee, K.-A.
``Obstacle problem for nonlinear 2nd-order elliptic operator."
Ph.D. Dissertation. (1998).

%\bibitem{Lee01}
%Lee, K.-A.
%``The obstacle problem for Monge-Amp\`{e}re equation."
%\textit{Comm. Partial Differential Equations} 26, no. 1-2 (2001): 33--42.


\bibitem{LP} 
Lee, K.-A. and J. Park.
``Obstacle problem for a non-convex fully nonlinear operator."
\textit{Journal of Differential Equations} 265, no.11 (2018): 5809--5830.


\bibitem{LPS}
Lee, K.-A., J. Park, and H. Shahgholian.
``The regularity theory for the double obstacle problem." 
\textit{Calc. Var. Partial Differential Equations} 58, no. 3 (2019): 1--19.



\bibitem{LS01}
Lee, K.-A. and H. Shahgholian.
``Regularity of a free boundary for viscosity solutions of nonlinear elliptic equations."
\textit{Comm. Pure Appl. Math.} 54, no. 1 (2001): 43-–56.

\bibitem{LS03}
Lee, K.-A. and H. Shahgholian.
``Hausdorff measure and stability for the p-obstacle problem ($2 < p < 1$)." 
\textit{J. Differential Equations} 195, no. 1 (2003): 14--24.

\bibitem{MR}
Moosavi, P. and M. Reppen.
``A review of the double obstacle problem",
a degree project, KTH Stockholm (2011).


\bibitem{PSU}
Petrosyan, A., H. Shahgholian, and N. Uraltseva.
\textit{Regularity of free boundaries in obstacle-type problems},
Providence, RI, 2012.

\bibitem{Win}
Winter,  N.
``$W^{2, p}$-and $W^{1, p}$-estimates at the boundary for solutions of fully nonlinear, uniformly elliptic equations",
\textit{Zeitschrift für Analysis und ihre Anwendungen} 28, no. 2 (2009): 129--164.

\end{thebibliography}
%\bibliographystyle{chicago}

%-----------------------------The-end-of-reference---------------------------------

\end{document}